\documentclass{amsart}

\usepackage{amsmath}
\usepackage{amsfonts}
\usepackage{amssymb}
\usepackage{color}
\usepackage{xcolor}
\usepackage{enumerate}
\usepackage{lineno}
\usepackage{hyperref}

\newtheorem{theorem}{Theorem}[section]
\newtheorem{lemma}[theorem]{Lemma}
\newtheorem{proposition}[theorem]{Proposition}
\newtheorem{notation}[theorem]{Notation}

\theoremstyle{definition}

\newtheorem{claim}[theorem]{Claim}

\theoremstyle{remark}

\numberwithin{equation}{section}

\newtheorem{question}[theorem]{Question}

\newcommand{\N}{\mathbb{N}}

\newcommand{\Bd}{\mathrm{bd}}

\def\int{\mathop{\rm int}\nolimits}
\def\cl{\mathop{\rm cl}\nolimits}

\usepackage{hyperref}
\usepackage{lineno}

\title{The hyperspace of nonblockers of $\mathcal{F}_1(X)$}
\author{Javier Camargo, David Maya and Luis Ortiz }
\thanks{MSC: 54B20, 54F15. Keywords: Continuum, Hyperspace, Non-block hyperspace, Simple closed curve, Non-cut set. \\ The first author thanks La Vicerrector\'ia de Investigaci\'on y Extensi\'on de la Universidad Industrial de Santander y su Programa de Movilidad.}
\begin{document}


\maketitle

\begin{abstract}
A \textsl{continuum} is a compact connected metric space. A non-empty closed subset $B$ of a continuum $X$ \textsl{does not block} $x \in X \setminus B$ provided that the union of all subcontinua of $X$ containing $x$ and contained in $X\setminus B$ is dense in $X$. We denote the collection of all non-empty closed subset $B$ of $X$ such that $B$ does not block each element of $X \setminus B$ by $\mathcal{NB}(\mathcal{F}_1(X))$. In this paper we show some properties of the hyperspace $\mathcal{NB}(\mathcal{F}_1(X))$. Particularly, we prove that the simple closed curve is the unique continuum $X$ such that $\mathcal{NB}(\mathcal{F}_1(X))=\mathcal{F}_1(X)$, given a positive answer to a question posed by Escobedo, Estrada-Obreg\'on and Villanueva in 2012.
\end{abstract}

\section{Introduction}

A \textsl{continuum} is a compact connected metric space. For a continuum $X$, let $2^X$ be the hyperspace of all non-empty closed subset of $X$ and let $\mathcal{F}_1(X)$ be the hyperspace of all one-point subsets of $X$.

Given a continuum $X$ and $A,B\in 2^X$, we say that $B$ does not block $A$ if $A\cap B=\emptyset$ and the union of all subcontinua of $X$ intersecting $A$ and contained in $X\setminus B$ is dense in $X$. Given $\mathcal{L}\subseteq 2^X$, we denote the collection of elements of $2^X$ which do not block each element of $\mathcal{L}$, by $\mathcal{NB}(\mathcal{L})$.



The notions of blocker and nonblocker in hyperspaces have been studied recently by many authors (see \cite{Bobok2016b}, \cite{Bobok2016}, \cite{Escobedo2012}, \cite{Escobedo2017} and \cite{Illanes2011}). In \cite{Illanes2011}, the authors introduce the notion of blocker, and present general properties using different kind of continua. In \cite{Escobedo2012}, the set $\mathcal{NB}(\mathcal{F}_1(X))$ is used to characterizes classes of continua; for instance, it is proved that if $X$ is localy connected, then $\mathcal{NB}(\mathcal{F}_1(X))=\mathcal{F}_1(X)$ if and only if $X$ is the simple closed curve. Naturally, they posed the following question:

\begin{question}\cite[Question 3.3]{Escobedo2012}\label{Question}
	Is a simple closed curve the only continuum $X$ such that $\mathcal{NB}(\mathcal{F}_1(X))=\mathcal{F}_1(X)$?
\end{question}

In \cite{Escobedo2017}, it is defined $\mathcal{NWC}(X), \mathcal{NB}^{\ast}(\mathcal{F}_1(X)), \mathcal{S}(X)$ and $\mathcal{NC}(X)$ (see Notation \ref{espacios}), for some continuum $X$, and it is proved the following result:

\begin{theorem}\cite[Theorem 5.7]{Escobedo2017}\label{Theorem 5.7}
	For a continuum $X$, the following statements are equivalent:
	\begin{enumerate}
		\item $X$ is a simple closed curve,
		\item $\mathcal{NWC}(X)=\mathcal{F}_1(X)$,
		\item $\mathcal{NB}^{\ast}(\mathcal{F}_1(X))=\mathcal{F}_1(X)$,
		\item $\mathcal{S}(X)=\mathcal{F}_1(X)$,
		\item $\mathcal{NC}(X)=\mathcal{F}_1(X)$.		
	\end{enumerate}
\end{theorem}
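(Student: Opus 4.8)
The plan is to establish Theorem~\ref{Theorem 5.7} by proving that each of the four hyperspace conditions (2)--(5) is equivalent to (1), with a \emph{uniform} forward direction and a reverse direction whose topological content is isolated in a single structural statement. All four hyperspaces $\mathcal{NWC}(X)$, $\mathcal{NB}^{\ast}(\mathcal{F}_1(X))$, $\mathcal{S}(X)$ and $\mathcal{NC}(X)$ collect closed subsets of $X$ whose complement is ``large'' in a graded sense --- the complement admits subcontinua that fill $X$, or joins prescribed points by subcontinua, or fails to separate $X$ --- so that an equality ``$\,\cdot\,=\mathcal{F}_1(X)$'' asserts exactly that the \emph{only} closed sets with that largeness property are the singletons. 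My first move would be to record, for an arbitrary continuum, the inclusions relating these four hyperspaces, so that a single equality with $\mathcal{F}_1(X)$ propagates to the others; this reduces the four equivalences to one genuine reverse implication.

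For the forward implications $(1)\Rightarrow(2),(3),(4),(5)$ I would argue directly from the structure of the simple closed curve. Writing $X=\{e^{i\theta}:\theta\in[0,2\pi)\}$, deleting a single point leaves an open arc whose closure is all of $X$, so every singleton meets each largeness condition; on the other hand, deleting any closed set $B$ that is \emph{not} a singleton leaves a complement each of whose components is an open arc with \emph{proper} closure, whence no subcontinuum of $X\setminus B$ comes near the deleted part and $B$ fails every one of the defining conditions. Thus for the simple closed curve each of the four hyperspaces collapses exactly onto $\mathcal{F}_1(X)$, giving all four forward implications at once.

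The substance lies in the reverse direction. Having reduced to a single condition by the inclusion chain, I would assume that the only ``large'' closed sets of $X$ are its singletons and extract three facts. First, no point of $X$ is a cut point: if $\{p\}$ separated $X$ into two pieces, every subcontinuum of $X\setminus\{p\}$ would lie in the closure of one piece and could not fill $X$, contradicting that $\{p\}$ is large. Second, every two-point set separates $X$: a pair $\{a,b\}$ whose removal left $X$ connected would itself be large, contradicting that only singletons are large. Third --- and this is the crux --- I would show that the condition forces $X$ to be locally connected, after which the two separation facts are precisely the hypotheses of the classical characterization that the simple closed curve is the unique continuum with no cut point in which every pair of points separates (equivalently, in which every point has order $2$, i.e.\ $\ord(p)=2$ for every $p$); alternatively, once local connectedness is in hand one may invoke the known locally connected case recalled in the introduction.

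The main obstacle I anticipate is exactly the passage to local connectedness, together with the translation of ``largeness fails for a pair'' into ``the pair separates.'' One must exclude non-locally-connected continua --- the $\sin(1/x)$-type continua, solenoids and other indecomposable continua --- where the relationship between connectedness of a complement and the existence of subcontinua filling $X$ is subtle, and where a pair of points may fail to be large without cleanly separating $X$. I expect the careful verification of the universal inclusion chain among $\mathcal{NWC}(X)$, $\mathcal{NB}^{\ast}(\mathcal{F}_1(X))$, $\mathcal{S}(X)$ and $\mathcal{NC}(X)$, and the elimination of these pathologies in the reverse direction, to absorb the bulk of the argument.
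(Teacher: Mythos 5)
You should first be aware that the paper you are working from does not prove Theorem \ref{Theorem 5.7} at all: it is quoted from \cite{Escobedo2017}, so your proposal has to stand on its own, and it has two genuine gaps. The first is your opening reduction. The inclusions (\ref{equ00}), extended by $\mathcal{S}(X)\subseteq\mathcal{NC}(X)$ (a shore set cannot cut, since continua converging to $X$ eventually meet both sides of any separation), do hold; but an equality with $\mathcal{F}_1(X)$ at one node of an inclusion chain does not propagate to the other nodes: $\mathcal{NWC}(X)=\mathcal{F}_1(X)$ yields $\mathcal{F}_1(X)\subseteq\mathcal{S}(X)$ but not $\mathcal{S}(X)\subseteq\mathcal{F}_1(X)$, while $\mathcal{S}(X)=\mathcal{F}_1(X)$ yields $\mathcal{NWC}(X)\subseteq\mathcal{F}_1(X)$ but not the reverse. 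The only statement that all of (2)--(5) imply via the chain is ``$\mathcal{NWC}(X)\subseteq\mathcal{F}_1(X)$ and $\mathcal{F}_1(X)\subseteq\mathcal{NC}(X)$'', and that statement is satisfied by every nondegenerate indecomposable continuum: there $\mathcal{NWC}(X)=\emptyset$, because two points of $X\setminus A$ chosen in distinct composants lie in no proper subcontinuum, so every closed set with empty interior is a weak cut set, and no point cuts an indecomposable continuum. Hence no ``propagation'' argument can prove the theorem; each of (2), (3), (4), (5) requires its own reverse implication.

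The second gap is the step you call your second fact: ``a pair whose removal leaves $X$ connected would itself be large.'' This is true by definition for $\mathcal{NC}(X)$, so your route does give $(5)\Rightarrow(1)$ (granting the classical two-point separation characterization), but it is false for the largeness notions in (2), (3), (4): connectedness of $X\setminus\{a,b\}$ produces no subcontinua of $X\setminus\{a,b\}$ joining prescribed points, let alone converging to $X$. Concretely, let $X$ be the double $\sin(1/x)$-continuum, two graphs $G^{+}$ and $G^{-}$ (over $(0,1]$ and $[-1,0)$) sharing the limit segment $J=\{0\}\times[-1,1]$, and let $p$ be a non-end point of $J$. Then $X\setminus\{p\}$ is connected, yet a boundary-bumping argument shows every subcontinuum of $X$ meeting both $G^{+}$ and $G^{-}$ contains all of $J$, hence contains $p$; consequently $\{p\}$ is a weak cut set, blocks every point of $X\setminus\{p\}$, and is not a shore set, and the same holds for pairs in $J$. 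So for (2), (3), (4) the passage from ``not large'' to ``separates'' --- equivalently, the elimination of exactly this non-locally-connected behaviour --- is the entire content of the theorem, and your proposal explicitly defers it (``I would show that the condition forces $X$ to be locally connected'') without offering an argument; this is where \cite{Escobedo2017}, and the present paper in Sections 4--5 for the analogous statement about $\mathcal{NB}(\mathcal{F}_1(X))$, spend all of their effort. Two smaller points: $(2)\Rightarrow(1)$ needs none of your three facts, since the single inclusion $\mathcal{F}_1(X)\subseteq\mathcal{NWC}(X)$ is already the hypothesis of Bing's theorem \cite[Theorem 11]{Bing}, which is exactly how the paper's own Theorem \ref{TeoS1} concludes; and in the forward direction your claim that every non-singleton $B\subseteq S^1$ ``fails every one of the defining conditions'' is inaccurate for $\mathcal{NWC}$, since a nondegenerate closed subarc of $S^1$ has connected, even continuumwise connected, complement and is excluded from $\mathcal{NWC}(S^1)$ only by the empty-interior clause.
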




However, Theorem \ref{Theorem 5.7} does not answer Question \ref{Question}. The authors reformulated Question \ref{Question} as follows:

\begin{question}\cite[Question 4.8]{Escobedo2017}\label{Question1}
	Is there a non-locally connected continuum $X$ such that $\mathcal{NB}(\mathcal{F}_1(X))=\mathcal{F}_1(X)$?
\end{question} 

The main result of this paper is to give a positive answer to Question \ref{Question}. This paper is organized as follows:
In Section 3, we compare $\mathcal{NB}(\mathcal{F}_1(X))$ with others families of closed sets define in \cite{Escobedo2017} (for instance, $\mathcal{NWC}(X)$, $\mathcal{NB}^{\ast}(\mathcal{F}_1(X))$ and $\mathcal{S}(X)$, as in Theorem \ref{Theorem 5.7}). In Section 4, we characterize $\mathcal{NB}(\mathcal{F}_1(X))$ when $X$ is an irreducible decomposable continuum.
Finally, in Section 5, we have the main result in this paper. Theorem \ref{TeoS1} shows that the simple closed curve is the unique continuum $X$ such that the hyperspace of nonblockers of $\mathcal{F}_1(X)$ is in fact, $\mathcal{F}_1(X)$. Thus, we generalize \cite[Theorem~3.2]{Escobedo2012}, give a positive answer to Question \ref{Question}, complete Theorem \ref{Theorem 5.7} and answer Question \ref{Question1} negatively.
%
%
%

\section{Definitions and auxiliary results}

The cardinality of a set $A$ is denoted by $|A|$. The symbol $\mathbb{N}$ represents the set of all positive integers.

Given a subset $A$ of a space $X$, the interior, the closure and the boundary of $A$ are denoted by $\int_X (A)$, $\cl_X (A)$ and $\Bd_X(A)$, respectively. The word \textsl{mapping} stands for a continuous function between topological spaces. A \textsl{simple closed curve} is a space homeomorphic to the unit circle $S^1=\{(x,y)\in\mathbb{R}^2 : x^2+y^2=1\}$.

The \textsl{composant} of a point $z$ in a continuum $X$ is defined by $$\kappa(z)=\bigcup\{L : L \text{ is a proper subcontinuum of }X\text{ and }z\in L\}.$$

A continuum $X$ is called:
\begin{itemize}
\item \textsl{irreducible} provided there are two points $p$ and $q$ of $X$ such that no proper subcontinuum of $X$ contains both $p$ and $q$; we write $X=\mathrm{irr}\{p,q\}$, 
\item \textsl{decomposable} if $X=A\cup B$, where $A$ and $B$ are proper subcontinua of $X$, and 
\item \textsl{indecomposable} if $X$ is not decomposable.
\end{itemize}
Observe that a continuum is irreducible if and only if it has more than one composant. 

Given subsets $U_1, U_2, \ldots, U_l$ of a continuum $X$, we define $$\langle U_1, U_2, \ldots, U_l\rangle= \left\{ A\in 2^X : A\subseteq \bigcup_{i=1}^{l}U_i\text{ and }A\cap U_i\neq\emptyset\text{ for each }i\right\}.$$

The \textsl{Vietoris topology} is the topology on $2^X$ generated by the base consisting of all subsets of the form $\langle U_1, U_2, \ldots, U_l \rangle$, where $U_1, U_2, \ldots, U_l$ are open subsets of the continuum $X$ (see \cite[Proposition~2.1, p.~155]{Michael1951}).

A connected element of $2^X$ is called \textsl{subcontinuum} of the continuum $X$. The hyperspace of all subcontinua of a continuum $X$ is denoted by $\mathcal{C}(X)$. Observe that if $X$ is a continuum, then $\mathcal{F}_1(X) \subseteq \mathcal{C}(X) \subseteq 2^X$. Thus, the hyperspaces $\mathcal{C}(X)$ and $\mathcal{F}_1(X)$ will be considered as subspaces of $2^X$.

For a continuum $X$ and $A,B \in 2^X$ such that $A \subseteq B$, an \textsl{order arc from} $A$ \textsl{to} $B$ is a continuous function $\alpha : [0,1] \to 2^X$ such that $\alpha(0) = A$, $\alpha(1) = B$ and if $0 \leq s < t \leq 1$, then $\alpha(s) \subsetneq \alpha(t)$. When $\alpha([0,1])\subseteq \mathcal{C}(X)$, we say that it is \textsl{an order arc in} $\mathcal{C}(X)$.

The following is \cite[Theorem 14.6]{Nadler}.

\begin{theorem}\label{orderarc}
Let $X$ be a continuum and let $A,B \in \mathcal{C}(X)$ be such that $A \subsetneq B$. Then there exists an order arc in $\mathcal{C}(X)$ from $A$ to $B$.
\end{theorem}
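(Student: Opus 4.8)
My plan is to realize the order arc as a maximal nest of subcontinua between $A$ and $B$. Consider the set $\mathcal{C}_A^B=\{E\in\mathcal{C}(X):A\subseteq E\subseteq B\}$, which is a closed (hence compact) subset of $\mathcal{C}(X)$, partially ordered by inclusion. Using the Hausdorff maximal principle I would choose a maximal chain (nest) $\mathcal{N}\subseteq\mathcal{C}_A^B$ with $A,B\in\mathcal{N}$. The goal is then to prove that $\mathcal{N}$, as a subspace of $\mathcal{C}(X)$, is homeomorphic to $[0,1]$ by an order-preserving map; reparametrizing that homeomorphism produces a continuous $\alpha:[0,1]\to\mathcal{C}(X)$ with $\alpha(0)=A$, $\alpha(1)=B$ and $\alpha(s)\subsetneq\alpha(t)$ for $s<t$, i.e. exactly the required order arc in $\mathcal{C}(X)$.

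First I would verify that $\mathcal{N}$ is closed in $\mathcal{C}(X)$, and hence compact. If $E_n\to L$ with $E_n\in\mathcal{N}$, then $L\in\mathcal{C}(X)$ and $A\subseteq L\subseteq B$. For a fixed $F\in\mathcal{N}$, each $E_n$ is comparable with $F$, so one of the alternatives $E_n\subseteq F$, $F\subseteq E_n$ holds for infinitely many $n$; passing to that subsequence and using that inclusions are preserved under limits in $2^X$ gives $L\subseteq F$ or $F\subseteq L$. Thus $\mathcal{N}\cup\{L\}$ is again a nest, and maximality forces $L\in\mathcal{N}$.

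The heart of the argument is that $\mathcal{N}$ has no jumps: there is no pair $C\subsetneq D$ in $\mathcal{N}$ with no member of $\mathcal{N}$ strictly between them. Given such a pair I would enlarge $C$ slightly inside $D$. Let $d$ be a fixed metric on $X$, put $\rho=\max_{y\in D}d(y,C)>0$, fix $0<\epsilon<\rho$, and set $U=\{y\in D:d(y,C)<\epsilon\}$, a nonempty proper open subset of $D$ containing $C$. Let $E$ be the component of $\cl_D(U)$ containing $C$. By the boundary bumping theorem, $E$ meets $\Bd_D(U)\subseteq\{y\in D:d(y,C)=\epsilon\}$, so $E\supsetneq C$; and since $E\subseteq\{y\in D:d(y,C)\le\epsilon\}$ while some point of $D$ lies at distance $\rho>\epsilon$ from $C$, we get $E\subsetneq D$. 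Being a component of a compact set, $E$ is a subcontinuum, and $C\subsetneq E\subsetneq D$. As every member of $\mathcal{N}$ is comparable with both $C$ and $D$, it is comparable with $E$, so $\mathcal{N}\cup\{E\}$ is a strictly larger nest, contradicting maximality.

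To finish, I would invoke a Whitney map $\mu:\mathcal{C}(X)\to[0,1]$ (whose existence is standard), continuous and strictly increasing along proper inclusions. Then $\mu|_{\mathcal{N}}$ is a continuous injection of the compact space $\mathcal{N}$ into $\mathbb{R}$, hence a homeomorphism onto $\mu(\mathcal{N})$. A gap in $\mu(\mathcal{N})$ would split $\mathcal{N}$ into a lower and an upper piece; the closure of the union of the lower piece (its supremum) and the intersection of the upper piece (its infimum) both lie in $\mathcal{N}$ by closedness and constitute a jump, which we have excluded. Hence $\mu(\mathcal{N})$ is a compact connected subset of $\mathbb{R}$, namely $[\mu(A),\mu(B)]$, so $\mathcal{N}\cong[\mu(A),\mu(B)]\cong[0,1]$ by an order isomorphism, yielding the order arc. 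The main obstacle is precisely the no-jumps step: the rest is soft point-set topology, but excluding jumps requires the genuinely geometric input that a proper subcontinuum can always be grown by an arbitrarily small amount while remaining inside the larger continuum, which is where the boundary bumping theorem is essential; a secondary subtlety, that the subspace topology on $\mathcal{N}$ coincides with its order topology, is handled for free by the order-embedding $\mu|_{\mathcal{N}}$ into $\mathbb{R}$.
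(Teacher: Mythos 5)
Your proposal is correct, and it coincides with the proof the paper implicitly relies on: the paper does not prove this statement itself but cites it as \cite[Theorem 14.6]{Nadler}, whose classical argument is exactly yours. Obtaining a maximal nest in $\{E\in\mathcal{C}(X) : A\subseteq E\subseteq B\}$ via the Hausdorff maximal principle, showing it is closed, excluding jumps by boundary bumping, and parametrizing by a Whitney map is the standard route, so there is nothing to add beyond noting that your one-line claim that comparability with $C$ and $D$ yields comparability with $E$ tacitly uses the jump hypothesis to rule out elements strictly between $C$ and $D$ (which is exactly how the case analysis closes).
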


For a continuum $X$, an element $B \in 2^X \setminus \{X\}$:
\begin{itemize}
\item is a \textsl{non-weak cut set} of $X$ provided any two points in $X \setminus B$ belong to a subcontinuum of $X$ contained in $X \setminus B$,
\item \textsl{does not block} $A \in 2^X$ if there exists a mapping $\alpha : [0,1] \to 2^X$ such that $\alpha(0) = A$, $\alpha(1) = X$ and $\alpha(t) \cap B = \emptyset$ for each $t \in [0,1)$.
\item is a \textsl{shore set} of $X$ provided there exists a sequence $(C_{n})_{n \in \mathbb{N}}$ in $\mathcal{C}(X)$ such that $\lim C_n = X$ and each $C_n$ is contained in $X \setminus B$.
\end{itemize}

The following result will be used throughout the current paper.

\begin{proposition}{\cite[Proposition~2.2, p.~3615]{Escobedo2012}}\label{pro:nonb}
For a continuum $X$, $B \in 2^X$ and $x \in X \setminus B$, the following statements are equivalent:
\begin{enumerate}
\item[(a)] $B$ does not block $\{x\}$;
\item[(b)] there exists an order arc $\alpha : [0,1] \to \mathcal{C}(X)$ from $\{x\}$ to $X$ such that $B \cap \alpha(t) = \emptyset$ for each $t \in [0,1)$;
\item[(c)] there exists a sequence $(C_n)_{n \in \mathbb{N}}$ in $\mathcal{C}(X)$ such that $x \in C_n \subseteq C_{n+1} \subseteq X \setminus B$ for each $n \in \mathbb{N}$, and $\bigcup \{ C_n : n \in \mathbb{N}\}$ is dense in $X$; and
\item[(d)] $\bigcup \{ C \in \mathcal{C}(X) : x \in C \subseteq X \setminus B\}$ is dense in $X$.
\end{enumerate}
\end{proposition}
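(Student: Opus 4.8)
The plan is to establish the cyclic chain of implications $(a)\Rightarrow(d)\Rightarrow(c)\Rightarrow(b)\Rightarrow(a)$. The implication $(b)\Rightarrow(a)$ is immediate, since an order arc in $\mathcal{C}(X)$ from $\{x\}$ to $X$ is in particular a mapping $\alpha\colon[0,1]\to 2^X$ with $\alpha(0)=\{x\}$, $\alpha(1)=X$ and $\alpha(t)\cap B=\emptyset$ for $t<1$, which is exactly the defining property of ``$B$ does not block $\{x\}$''. The two remaining constructive directions, $(d)\Rightarrow(c)$ and $(c)\Rightarrow(b)$, and the conceptual direction $(a)\Rightarrow(d)$, carry the actual content.

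For $(d)\Rightarrow(c)$, set $\mathcal{D}=\{C\in\mathcal{C}(X):x\in C\subseteq X\setminus B\}$ and first observe that $\mathcal{D}$ is closed under finite unions: if $C_1,C_2\in\mathcal{D}$, then $C_1\cup C_2$ is a subcontinuum (two continua sharing the point $x$), contains $x$, and avoids $B$. Fix a countable base $\{U_k\}_{k\in\N}$ of $X$. For each $k$ with $U_k\cap\bigcup\mathcal{D}\neq\emptyset$ pick $E_k\in\mathcal{D}$ meeting $U_k$, and put $C_n=E_1\cup\cdots\cup E_n\in\mathcal{D}$. Then $(C_n)$ is nondecreasing, each $C_n$ satisfies $x\in C_n\subseteq X\setminus B$, and $\bigcup_n C_n$ meets every $U_k$ that $\bigcup\mathcal{D}$ meets, hence is dense by hypothesis $(d)$; this yields $(c)$. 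For $(c)\Rightarrow(b)$, use Theorem \ref{orderarc} to choose an order arc from $\{x\}$ to $C_1$ and, for each $n$, an order arc from $C_n$ to $C_{n+1}$; every member of these arcs lies between $\{x\}$ and some $C_{n+1}$, hence inside $X\setminus B$. Concatenate them on the intervals $[1-2^{-n},1-2^{-(n+1)}]$ and set the value at $1$ equal to $X$. Monotonicity across the concatenation is clear, $B$-avoidance on $[0,1)$ holds because each value is contained in some $C_{n+1}\subseteq X\setminus B$, and continuity at $1$ follows from a squeeze argument: for $t$ in the $n$-th interval the value contains $C_n$, and $C_n\to X$ because $(C_n)$ is an increasing sequence of continua whose union is dense.

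The heart of the argument is $(a)\Rightarrow(d)$. Given a mapping $\alpha\colon[0,1]\to 2^X$ as in the definition, define for $s\in[0,1)$ the closed set $\beta(s)=\cl_X\big(\bigcup_{u\in[0,s]}\alpha(u)\big)$. I claim $\beta(s)$ is a subcontinuum of $X$ containing $x$ and contained in $X\setminus B$. Containment of $x$ is clear from $\alpha(0)=\{x\}$. Disjointness from $B$ is a continuity--compactness observation: a point $b\in\beta(s)\cap B$ would be a limit of points $z_n\in\alpha(u_n)$ with $u_n\to u^{\ast}\le s<1$, whence $b\in\alpha(u^{\ast})$, contradicting $\alpha(u^{\ast})\cap B=\emptyset$. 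Once this is known, $\bigcup_{s<1}\beta(s)\supseteq\bigcup_{s<1}\alpha(s)$ is dense because $\alpha(s)\to X$ as $s\to1$ (so the union meets every open set), and since each $\beta(s)$ is a subcontinuum through $x$ inside $X\setminus B$, the union in $(d)$ is dense.

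The main obstacle is proving that $\beta(s)$ is connected, i.e. that a continuous path in $2^X$ issuing from a single point sweeps out a set with connected closure. I would argue by contradiction: suppose $\beta(s)=P\sqcup Q$ is a separation with $x\in P$ and $Q\neq\emptyset$; since $\bigcup_{u\le s}\alpha(u)$ is dense in $\beta(s)$, the set $S=\{u\in[0,s]:\alpha(u)\cap Q\neq\emptyset\}$ is nonempty, and a first-crossing analysis at $\sigma=\inf S$ yields the contradiction. Using compactness one shows $\sigma$ is attained, so $\alpha(\sigma)\cap Q\neq\emptyset$, while $\sigma>0$ because $\alpha(0)=\{x\}\subseteq P$; but $\alpha(u)\subseteq P$ for $u<\sigma$, and letting $u\uparrow\sigma$ together with continuity of $\alpha$ forces $\alpha(\sigma)\subseteq P$, a contradiction. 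This connectedness lemma is precisely what reconciles the general $2^X$-valued mapping in $(a)$ with the subcontinuum-based density condition in $(d)$; everything else is bookkeeping with order arcs and the finite-union closure of $\mathcal{D}$.
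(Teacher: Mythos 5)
This proposition is not proved in the paper at all: it is quoted verbatim from \cite[Proposition~2.2]{Escobedo2012}, so there is no internal argument to compare yours against. Judged on its own merits, your proof is correct and self-contained. The cyclic scheme $(a)\Rightarrow(d)\Rightarrow(c)\Rightarrow(b)\Rightarrow(a)$ is sound, and the step that carries the real content, $(a)\Rightarrow(d)$, is handled properly: the hypothesis only provides a continuous $2^X$-valued map $\alpha$, and your swept-out sets $\beta(s)=\cl_X\bigl(\bigcup_{u\le s}\alpha(u)\bigr)$ bridge the gap to the subcontinuum-based condition $(d)$. Your first-crossing argument for connectedness of $\beta(s)$ --- attainment of $\sigma=\inf S$ via compactness of $Q$ plus Hausdorff continuity, then $\alpha(u)\subseteq P$ for $u<\sigma$ forcing $\alpha(\sigma)\subseteq P$ in the limit --- is airtight, and, importantly, it never requires the individual sets $\alpha(u)$ to be connected, which is exactly what makes the reconciliation between the $2^X$-valued definition and statement $(d)$ nontrivial. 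The disjointness of $\beta(s)$ from $B$ and the density of $\bigcup_{s<1}\alpha(s)$ are also argued correctly.

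One detail should be patched in $(c)\Rightarrow(b)$: Theorem \ref{orderarc} requires a \emph{strict} inclusion $A\subsetneq B$, while the sequence in $(c)$ is only nondecreasing, so $C_n=C_{n+1}$ (or $C_1=\{x\}$) is possible and the order arc you invoke between equal terms does not exist. The repair is one line: if the sequence were eventually constant, equal to some $C_N$, then $C_N$ would be a closed dense subset of $X$, hence $C_N=X$, contradicting $C_N\subseteq X\setminus B$ with $B\neq\emptyset$; so one may pass to a strictly increasing subsequence (which has the same union) before concatenating the order arcs. With that adjustment, the rest of your bookkeeping --- the finite-union closure of $\mathcal{D}$ in $(d)\Rightarrow(c)$, the containment of every member of each partial arc in some $C_{n+1}\subseteq X\setminus B$, and the squeeze argument for continuity at $t=1$ using $C_n\to X$ in the Hausdorff metric --- goes through as written.
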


We use the following notation introduced in \cite{Escobedo2017}:
\begin{notation}\label{espacios} 
Given a continuum $X$.
\begin{align*}
\mathcal{NWC}(X) = &\{A\in 2^X : \int_X(A)=\emptyset \text{ and }A\text{ is non-weak cut set of }X\}, \\
\mathcal{NB}(\mathcal{F}_1(X))= &\{B\in 2^X : B \text{ does not block } \{x\} \text{ for each } x \in X \setminus B\}, \\
\mathcal{NB}^{\ast}(\mathcal{F}_1(X))= & \{B\in 2^X : B \text{ does not block } \{x\} \text{ for some } x \in X \setminus B\} \ \text{and} \\
\mathcal{S}(X)=&\{A\in 2^X : A \text{ is shore set of } X\}.
\end{align*}
\end{notation}

In \cite[Theorem 3.2]{Escobedo2017}, it is proved:
\begin{equation}\label{equ00}
\mathcal{NWC}(X)\subseteq \mathcal{NB}(\mathcal{F}_1(X))\subseteq \mathcal{NB}^{\ast}(\mathcal{F}_1(X))\subseteq \mathcal{S}(X).
\end{equation}

The following result is \cite[Theorem 2.7]{Bobok2016b} or \cite[Proposition 6.1]{Escobedo2017}.

\begin{proposition}\label{prop2}
	For each continuum $X$, $X$ is irreducible about $\mathcal{NB}^{\ast}(\mathcal{F}_1(X))$.
\end{proposition}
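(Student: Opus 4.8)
The plan is to read ``$X$ is irreducible about $\mathcal{NB}^{\ast}(\mathcal{F}_1(X))$'' in the standard way: no proper subcontinuum of $X$ contains every member of the family, equivalently, no proper subcontinuum of $X$ contains $\bigcup\mathcal{NB}^{\ast}(\mathcal{F}_1(X))$. I would first reduce this to a statement about single points. Call $p\in X$ a \emph{non-block point} if $\{p\}$ does not block $\{x\}$ for some $x\in X\setminus\{p\}$; by the very definition of the family, the singleton $\{p\}$ then lies in $\mathcal{NB}^{\ast}(\mathcal{F}_1(X))$, so the set $N$ of non-block points satisfies $N\subseteq\bigcup\mathcal{NB}^{\ast}(\mathcal{F}_1(X))$. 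Since ``irreducible about a subset'' is inherited by any superset contained in $X$ (a proper subcontinuum containing a larger set contains the smaller one), it suffices to prove that $X$ is irreducible about $N$; that is, every proper subcontinuum $K\subsetneq X$ omits a non-block point. I note that this reduction is robust to the precise reading of ``irreducible about a family'': whether one requires a member \emph{not contained} in $K$ or a member \emph{disjoint} from $K$, a non-block point $p\in X\setminus K$ produces the singleton $\{p\}$ witnessing both, since $\{p\}\cap K=\emptyset$.

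The engine of the argument is Proposition \ref{pro:nonb}. Suppose that, for a given proper subcontinuum $K$, I can produce an order arc $\sigma\colon[0,1]\to\mathcal{C}(X)$ with $\sigma(0)=K$, $\sigma(1)=X$ and a point $p\in X$ with $p\notin\sigma(t)$ for every $t<1$. Fixing any $x_0\in K\subseteq\sigma(t)$, the continua $\{\sigma(t):t<1\}$ all contain $x_0$, avoid $p$, and have union dense in $X$ (its closure is $\sigma(1)=X$ by continuity of $\sigma$). Hence condition (c) of Proposition \ref{pro:nonb} holds, so $\{p\}$ does not block $\{x_0\}$, meaning $p\in N$; and $p\notin K=\sigma(0)$. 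This is exactly the conclusion needed, so the entire proposition rests on producing such a ``terminal-point'' order arc out of every proper subcontinuum.

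To produce it I would use a maximality argument. By the Hausdorff maximal principle, choose a maximal chain $\mathcal{K}$ of proper subcontinua each containing $K$, and realize it as an order arc (Theorem \ref{orderarc}). First one checks $\cl_X(\bigcup\mathcal{K})=X$: otherwise Theorem \ref{orderarc} would furnish an order arc from $\cl_X(\bigcup\mathcal{K})$ to $X$ whose proper members could be appended to $\mathcal{K}$, contradicting maximality. If the plain union satisfies $\bigcup\mathcal{K}\neq X$, then any $p\in X\setminus\bigcup\mathcal{K}$ is omitted by every member of the chain, and paragraph two finishes the proof. The main obstacle is the remaining case $\bigcup\mathcal{K}=X$: here an arbitrary maximal chain may exhaust $X$ as a set while non-block points still exist ``elsewhere'' (as for the closed topologist's sine curve, where a chain absorbing the free end-point first leaves no point behind even though the free end-point is a non-block point). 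Thus the delicate point is not merely to grow $K$ to $X$, but to control \emph{which} point is left for the final instant. I expect to resolve this by replacing the blunt maximal chain with the finer construction of Bobok, Pyrih and Vejnar: fix a candidate point and take a maximal semicontinuum through $x_0$ avoiding it (analysing composants and applying boundary bumping), then verify its closure is $X$ via Proposition \ref{pro:nonb}. Concentrating the work on this exhausting case is where the real difficulty lies; alternatively, one may simply invoke the theorem that every continuum is irreducible about its set of non-block points, after which the proposition follows immediately from the reduction in the first paragraph.
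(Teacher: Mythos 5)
The first thing to note is that the paper does not prove this proposition at all: it is imported with the sentence ``The following result is \cite[Theorem 2.7]{Bobok2016b} or \cite[Proposition 6.1]{Escobedo2017}.'' So the only meaningful comparison is between your argument and that citation. Your first two paragraphs are correct and do real work: reducing ``$X$ is irreducible about $\mathcal{NB}^{\ast}(\mathcal{F}_1(X))$'' to the statement that every proper subcontinuum of $X$ omits a non-block point is exactly right (and, as you note, robust to the reading of ``irreducible about a family''), and that statement is precisely the Bobok--Pyrih--Vejnar theorem the paper cites. Hence your closing alternative---``simply invoke the theorem that every continuum is irreducible about its set of non-block points''---is not a concession but is literally the paper's own proof.

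Read instead as a self-contained argument, the proposal has a genuine gap, and it is the one you flag: the exhausting case $\bigcup\mathcal{K}=X$. This case really occurs, so it cannot be waved away: on the circle, take the closed arcs whose complements are open arcs shrinking to a point from one side; their union is the whole circle, and any maximal chain extending them still consists of proper subcontinua yet leaves no point behind, even though every point of the circle is a non-block point. (Your topologist's sine curve parenthetical is actually inaccurate: no chain of proper subcontinua exhausts it, since a member containing both a point of the limit segment and the free endpoint would equal $X$; a chain absorbing the free endpoint first leaves the entire limit segment behind. The circle is the example you want.) Moreover, the repair you sketch---fix a candidate point, take a maximal semicontinuum through $x_0$ avoiding it, then ``verify its closure is $X$'' via Proposition \ref{pro:nonb}---begs the question: for an arbitrary candidate $p$ the maximal semicontinuum avoiding $p$ need not be dense (that failure is exactly what it means for $\{p\}$ to block $\{x_0\}$), and producing a point $p\notin K$ for which density is guaranteed is the entire content of \cite[Theorem 2.7]{Bobok2016b}. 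So your direct argument settles only the non-exhausting case; what remains is not a technicality but the cited theorem itself. The honest summary: your proposal is correct exactly insofar as it falls back on the same citation the paper uses, and the maximal-chain work, sound as far as it goes, does not close the gap.
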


\section{Hyperspace of non-cut sets}

In \cite{Escobedo2017}, it is studied when the equalities are hold in (\ref{equ00}). In Theorem \ref{Teorema00}, we generalize \cite[Theorem 3.12]{Escobedo2017}.

\begin{lemma}\label{Lema3.3}
	Let $X$ be a continuum such that there exists $p\in X$, where $\{p\}\in\mathcal{S}(X)\setminus \mathcal{NB}^{\ast}(\mathcal{F}_1(X))$. If $X\setminus L$ is disconnected, for some $L\in\mathcal{C}(X)\setminus\{X\}$, then $|\mathcal{NB}^{\ast}(\mathcal{F}_1(X))|=\infty$.
\end{lemma}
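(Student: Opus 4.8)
The plan is to prove something slightly stronger, namely that $\{v\} \in \mathcal{NB}^{\ast}(\mathcal{F}_1(X))$ for every point $v \in (X \setminus L) \setminus \{p\}$. Since $L$ is a proper subcontinuum, $X$ is nondegenerate, so $X \setminus L$ is a nonempty open subset of a continuum and is therefore infinite; deleting the single point $p$ leaves infinitely many admissible $v$, and the corresponding singletons are pairwise distinct, so this immediately yields $|\mathcal{NB}^{\ast}(\mathcal{F}_1(X))| = \infty$.

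First I would fix the data coming from the two hypotheses. Writing $X \setminus L = U \cup V$ with $U, V$ nonempty, disjoint and open, I use $\{p\} \in \mathcal{S}(X)$ to fix a sequence $(C_n)$ in $\mathcal{C}(X)$ with $\lim C_n = X$ and $p \notin C_n$ for all $n$. Because $U$ and $V$ are nonempty and open, convergence to $X$ forces $C_n \cap U \neq \emptyset$ and $C_n \cap V \neq \emptyset$ for all large $n$; as $C_n$ is connected and $U, V$ are disjoint and open, $C_n$ must then meet $L$, for otherwise $C_n \subseteq U \cup V$ would be a separation of $C_n$. Discarding finitely many terms, I assume $C_n \cap L \neq \emptyset$ for every $n$. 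This is the step where the cutting subcontinuum $L$ is essential: it provides a fixed connected set met by all of the large members of the shore sequence.

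Now fix $v \in (X \setminus L) \setminus \{p\}$. \emph{The heart of the argument is to show that $v$ lies in only finitely many of the $C_n$}, and this is exactly where the blocking hypothesis enters. If $v \in C_n$, then $C_n$ is a subcontinuum containing $v$ and missing $p$, so $C_n \subseteq R_v := \bigcup\{C \in \mathcal{C}(X) : v \in C \subseteq X \setminus \{p\}\}$. Since $\{p\} \notin \mathcal{NB}^{\ast}(\mathcal{F}_1(X))$, the set $\{p\}$ blocks $v$, i.e. $R_v$ is not dense, so $O := X \setminus \cl_X(R_v)$ is a nonempty open set. As $\lim C_n = X$, we have $C_n \cap O \neq \emptyset$ for all large $n$, whence $C_n \not\subseteq \cl_X(R_v)$ and therefore $v \notin C_n$ for all large $n$. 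Thus $\{n : v \in C_n\}$ is finite.

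It remains to assemble the witness through Proposition \ref{pro:nonb}. After discarding the finitely many terms with $v \in C_n$, the remaining subsequence $(C_{n_k})$ still converges to $X$, avoids $v$, and has each term meeting $L$. Fixing any $x_{\ast} \in L$ and setting $D_k := L \cup C_{n_1} \cup \cdots \cup C_{n_k}$, each $D_k$ is a subcontinuum (every $C_{n_j}$ meets the connected set $L$), the sequence $(D_k)$ is nested, $x_{\ast} \in D_k \subseteq X \setminus \{v\}$, and $\bigcup_k D_k \supseteq \bigcup_k C_{n_k}$ is dense. By Proposition \ref{pro:nonb}(c), $\{v\}$ does not block $x_{\ast}$, so $\{v\} \in \mathcal{NB}^{\ast}(\mathcal{F}_1(X))$, completing the proof. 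The only subtle point is the finiteness established in the third paragraph; once that is in hand the construction is routine, the role of $L$ being to connect the truncated shore sequence and the role of the blocking of $p$ being to keep the chosen $v$ out of all but finitely many $C_n$.
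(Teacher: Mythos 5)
Your proof is correct, and it departs from the paper's argument at the one step where the hypothesis $\{p\}\notin\mathcal{NB}^{\ast}(\mathcal{F}_1(X))$ is actually used. Both proofs share the same scaffolding: take the shore sequence $(C_n)$ for $\{p\}$, use the separation $X\setminus L=U\cup V$ to force $C_n\cap L\neq\emptyset$ for all large $n$, form the nested continua $L\cup C_{n_1}\cup\cdots\cup C_{n_k}$, and conclude via the equivalence of (a) and (c) in Proposition~\ref{pro:nonb}. The difference lies in how one guarantees that the point to be shown non-blocking is avoided by the sequence. The paper invokes the blocking hypothesis to pass to a \emph{pairwise disjoint} subsequence of $(C_n)$ --- a reduction it justifies in a single terse sentence (implicitly: if no disjoint subsequence existed, some $C_N$ would meet infinitely many $C_m$, and the connected unions of $C_N$ with those terms would form nested continua with dense union avoiding $p$, contradicting that $\{p\}$ blocks the points of $C_N$) --- and then, taking $x_k\in C_k\cap U$, builds the nested continua only from later terms, producing one non-blocking singleton per term. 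You instead use the hypothesis through equivalence (d) of Proposition~\ref{pro:nonb}: since $\{p\}$ blocks every $v\neq p$, your set $R_v$ is not dense, so the terms of any sequence converging to $X$ eventually meet $X\setminus\cl_X(R_v)$ and hence eventually omit $v$. This avoids disjointification entirely and yields a strictly stronger intermediate statement: \emph{every} $v\in(X\setminus L)\setminus\{p\}$ satisfies $\{v\}\in\mathcal{NB}^{\ast}(\mathcal{F}_1(X))$, rather than merely an infinite sequence of such points, at the negligible extra cost of noting that a nonempty open subset of a nondegenerate continuum is infinite. Your route also makes the role of the blocking hypothesis more transparent than the paper's, where the disjointness reduction is essentially left to the reader.
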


\begin{proof}
	Let $L\in\mathcal{C}(X)\setminus\{X\}$ be such that $X\setminus L=U\cup V$, where $U$ and $V$ are non-empty open and disjoint subsets of $X$. Let $p\in X$ such that $\{p\}\in\mathcal{S}(X)\setminus \mathcal{NB}^{\ast}(\mathcal{F}_1(X))$. Hence, there exists a sequence $(C_n)_{n\in\N}$ in $\mathcal{C}(X)$ such that $\lim_{n\to\infty}C_n=X$ and each $C_n$ is contained in $X \setminus \{p\}$. Observe that $X\in \langle X,U,V\rangle$, implies that there exists $k\in\N$ such that $C_m\in\langle X,U,V\rangle$ for each $m\geq k$. Furthermore, $C_m\cap L\neq\emptyset$, by the connectedness of $C_m$, for each $m\geq k$. Since $\{p\}\notin \mathcal{NB}^{\ast}(\mathcal{F}_1(X))$, in light of the fact that (a) and (c) of Proposition~\ref{pro:nonb} are equivalent, we may suppose that $C_i\cap C_j=\emptyset$, for each $i\neq j$. Let $x_k\in C_k\cap U$ and $D_j=(\bigcup_{i=k+1}^{j}C_i)\cup L$, for each $j>k+1$. It is not difficult to see that $x_k\notin D_j, D_{j}\subseteq D_{j+1}$, for each $j>k+1$, and $\lim_{j\to\infty} D_j=X$. Thus, applying again the equivalence between (a) and (c) of Proposition~\ref{pro:nonb}, we obtain that $x_k\in \mathcal{NB}^{\ast}(\mathcal{F}_1(X))$. Therefore, there exists a secuence $(x_i)_{i\geq k}\subseteq \mathcal{NB}^{\ast}(\mathcal{F}_1(X))$, where $x_i\in C_i$ for each $i\geq k$, and $|\mathcal{NB}^{\ast}(\mathcal{F}_1(X))|=\infty$.
\end{proof}

\begin{theorem}\label{Teorema00}
	Let $X$ be a continuum such that $\mathcal{NB}^{\ast}(\mathcal{F}_1(X))$ is finite. Then, $\mathcal{S}(X)$ is finite, and $$\mathcal{NWC}(X)=\mathcal{NB}(\mathcal{F}_1(X))=\mathcal{NB}^{\ast}(\mathcal{F}_1(X))=\mathcal{S}(X).$$
\end{theorem}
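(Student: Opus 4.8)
The plan is to prove the single inclusion $\mathcal{S}(X)\subseteq\mathcal{NWC}(X)$; together with the chain (\ref{equ00}) this closes the loop $\mathcal{NWC}(X)\subseteq\mathcal{NB}(\mathcal{F}_1(X))\subseteq\mathcal{NB}^{\ast}(\mathcal{F}_1(X))\subseteq\mathcal{S}(X)\subseteq\mathcal{NWC}(X)$ and forces all four families to coincide; since $\mathcal{NB}^{\ast}(\mathcal{F}_1(X))$ is finite and sits inside $\mathcal{S}(X)$, the common value is finite. I would first pass to the level of points: if $B$ is a shore set and $\emptyset\ne B'\subseteq B$ is closed, the sequence witnessing $B\in\mathcal{S}(X)$ also witnesses $B'\in\mathcal{S}(X)$, so every point of a shore set is a shore point, and it suffices to understand $P=\{p\in X:\{p\}\in\mathcal{S}(X)\}$.

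The core step is to show $P=\{p:\{p\}\in\mathcal{NB}^{\ast}(\mathcal{F}_1(X))\}$; since $\mathcal{NB}^{\ast}(\mathcal{F}_1(X))$ is finite, this makes $P$ finite, and then every shore set, being a nonempty subset of $P$, is finite and there are only finitely many of them, so $\mathcal{S}(X)$ is finite. The inclusion $\supseteq$ is immediate from (\ref{equ00}). For $\subseteq$ I argue by contradiction: suppose $\{p\}\in\mathcal{S}(X)\setminus\mathcal{NB}^{\ast}(\mathcal{F}_1(X))$ and fix, via Proposition~\ref{pro:nonb}, a sequence $(C_n)$ in $\mathcal{C}(X)$ with $C_n\subseteq X\setminus\{p\}$ and $\lim C_n=X$. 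When $X\setminus L$ is disconnected for some $L\in\mathcal{C}(X)\setminus\{X\}$, Lemma~\ref{Lema3.3} applies verbatim and yields $|\mathcal{NB}^{\ast}(\mathcal{F}_1(X))|=\infty$, a contradiction. It remains to treat the case in which $X\setminus L$ is connected for every proper subcontinuum $L$ (in particular $X$ has no cut point). Here I would analyze the continuum-wise connected components of $X\setminus\{p\}$: since $\{p\}$ blocks every point, Proposition~\ref{pro:nonb}(d) forces each such component to have nondense closure, so the connected sets $C_n$ must lie in infinitely many distinct components $W_1,W_2,\dots$ with $\cl_X(W_n)\subsetneq X$ and $\lim\cl_X(W_n)=X$. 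Choosing $x_n\in C_n\subseteq W_n$, the remaining components provide subcontinua avoiding $x_n$ whose closures still converge to $X$ and share a common point, which makes each $\{x_n\}$ a nonblocker of some point and produces infinitely many distinct elements of $\mathcal{NB}^{\ast}(\mathcal{F}_1(X))$ --- again a contradiction. (For indecomposable $X$ one may shortcut this: a point $q$ in a composant different from that of $p$ has $\kappa(q)$ dense and built from subcontinua missing $p$, so every singleton already lies in $\mathcal{NB}^{\ast}(\mathcal{F}_1(X))$, which is therefore infinite.)

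Finally I would deduce $\mathcal{S}(X)\subseteq\mathcal{NWC}(X)$. Let $B\in\mathcal{S}(X)$; by the previous paragraph $B\subseteq P$ is finite, and since a nondegenerate continuum has no isolated point, $\int_X(B)=\emptyset$. The remaining point is to check that $B$ is a non-weak cut set, i.e. that any two points of $X\setminus B$ lie in a single subcontinuum contained in $X\setminus B$; I would extract the joining subcontinuum from the shore sequence for $B$ together with the growing unions used in the proof of Lemma~\ref{Lema3.3}, arranged to avoid the finitely many points of $B$. The main obstacle is the non-cut case of the core step: when no proper subcontinuum separates $X$, Lemma~\ref{Lema3.3} is unavailable and one must instead manufacture infinitely many non-block singletons directly from the continuum-wise connected components of $X\setminus\{p\}$. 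Making the ``common point'' in that construction rigorous --- and simultaneously securing the non-weak-cut property for $B$ in the same regime --- is where the real work lies.
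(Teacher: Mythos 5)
Your reduction is sound as far as it goes: shore sets pass to nonempty closed subsets, so once the set of shore points is finite, $\mathcal{S}(X)$ is finite; and Case 1 of your core step (some $L\in\mathcal{C}(X)\setminus\{X\}$ disconnects $X$) is exactly Lemma \ref{Lema3.3}. The essential gap is the case you flag yourself: $\{p\}\in\mathcal{S}(X)\setminus\mathcal{NB}^{\ast}(\mathcal{F}_1(X))$ while $X\setminus L$ is connected for every proper subcontinuum $L$. Your plan there --- pick $x_n\in C_n$ lying in distinct continuum-wise components $W_n$ of $X\setminus\{p\}$ and use ``the remaining components'' to exhibit each $\{x_n\}$ as a non-blocker --- does not close. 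To put $\{x_n\}$ in $\mathcal{NB}^{\ast}(\mathcal{F}_1(X))$ you need a single point $y$ whose continuum-wise component in $X\setminus\{x_n\}$ is dense. But the sets $W_k$, $k\neq n$, are pairwise disjoint, their closures need not avoid $x_n$ and need not share any point, and any subcontinuum meeting two distinct $W_k$'s must contain $p$; so you are forced to analyze the continuum-wise component of $p$ in $X\setminus\{x_n\}$, a question of the same difficulty you started with. In Lemma \ref{Lema3.3} the separating continuum $L$ is the hub that glues the pairwise disjoint $C_i$'s into the nested sequence $D_j$; in your Case 2 there is no hub, and that is not a detail but the whole problem. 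Your parenthetical shortcut covers only indecomposable $X$, and Case 2 is not exhausted by indecomposable continua: $S^1$ itself is decomposable and no proper subcontinuum disconnects it, so nothing in your outline rules out a decomposable, non-locally-connected continuum of this kind carrying a blocking shore point.

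The paper resolves precisely this case by a different mechanism that never manufactures infinitely many non-block singletons. It first proves (Claim \ref{claim5}) that every indecomposable subcontinuum of $X$ has empty interior: if $L$ were indecomposable with interior, the composants of $L$ together with the continuum $\cl_X(X\setminus L)$ would produce infinitely many elements of $\mathcal{NB}^{\ast}(\mathcal{F}_1(X))$. Hence every subcontinuum with nonempty interior is decomposable, which allows writing $X=Z_1\cup\dots\cup Z_{n+1}$, with $n=|\mathcal{NB}^{\ast}(\mathcal{F}_1(X))|$, as a union of proper subcontinua each containing a point outside the others; a pigeonhole argument places $\mathcal{NB}^{\ast}(\mathcal{F}_1(X))$ inside $\bigcup_{i\neq j_0}Z_i$, which is a proper subcontinuum because $X\setminus Z_{j_0}$ is connected, contradicting Proposition \ref{prop2} ($X$ is irreducible about $\mathcal{NB}^{\ast}(\mathcal{F}_1(X))$). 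Proposition \ref{prop2} is the tool your proposal never invokes, and it is exactly what makes the ``common point'' construction unnecessary. There is also a second, avoidable, gap: you plan to prove $\mathcal{S}(X)\subseteq\mathcal{NWC}(X)$ by hand, but extracting a subcontinuum joining two prescribed points of $X\setminus B$ from a shore sequence (which need not come near either point) is not routine; once $\mathcal{S}(X)$ is known to be finite, you should instead simply invoke \cite[Theorem 3.12]{Escobedo2017}, as the paper does.
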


\begin{proof}
	Suppose that $\mathcal{S}(X)$ is an infinte set. Hence, there is $p\in X$ such that $\{p\}\in\mathcal{S}(X)\setminus \mathcal{NB}^{\ast}(\mathcal{F}_1(X))$. Furthermore, $X\setminus L$ is connected, for each $L\in\mathcal{C}(X)$, by Lemma \ref{Lema3.3}.
	
	\begin{claim}\label{claim5}
		$\int_X(L)=\emptyset$, for each indecomposable subcontinuum $L$ of $X$.
	\end{claim}

Suppose that there exists an indecomposable subcontinuum $L$ of $X$ such that $\int_X(L)\neq\emptyset$. We know that $X=L\cup \cl_X(X\setminus L)$, where $\cl_X(X\setminus L)$ is a continuum, by Lemma \ref{Lema3.3}. Note that if there is a composant $\kappa$ of $L$ such that $\kappa\cap \cl_X(X\setminus L)=\emptyset$, then $\kappa\subseteq \mathcal{NB}^{\ast}(\mathcal{F}_1(X))$; a contradiction. Thus, each composant of $L$ intersects $\cl_X(X\setminus L)$. Let $z\in L\setminus\cl_X(X\setminus L)$. If $\kappa$ is a composant of $L$ such that $z\notin \kappa$, then $\cl_X(X\setminus L)\cup \kappa$ is a dense subset of $X$ such that does not contain $z$, and for each two points $x,y\in \cl_X(X\setminus L)\cup \kappa$ there is a continuum in $\cl_X(X\setminus L)\cup \kappa$ containing $\{x,y\}$; i.e., $\{z\}\in \mathcal{NB}^{\ast}(\mathcal{F}_1(X))$. Therefore, $\mathcal{NB}^{\ast}(\mathcal{F}_1(X))$ is infinite. A contradiction.

\bigskip

    Let $n=|\mathcal{NB}^{\ast}(\mathcal{F}_1(X))|$. By Lemma \ref{Lema3.3} and Claim \ref{claim5}, there are $Z_1,..., Z_{n+1}$ proper subcontinua of $X$ such that:
    \begin{itemize}
    	\item $X=Z_1\cup ...\cup Z_{n+1}$, and
    	\item $X\setminus (\bigcup_{i\neq j}Z_i)\neq\emptyset$, for each $j\in\{1,...,n+1\}$.
    \end{itemize}
Thus, $\mathcal{NB}^{\ast}(\mathcal{F}_1(X))\subseteq \bigcup_{i\neq j_0}Z_i$, for some $j_0\in\{1,...,n+1\}$. Since $X\setminus Z_{j_0}$ is connected, $\bigcup_{i\neq j_0}Z_i$ is a proper subcontinuum of $X$. We contradict Proposition \ref{prop2}. Therefore, $\mathcal{S}(X)$ is finite. Finally, the theorem follows from \cite[Theorem 3.12]{Escobedo2017}.
\end{proof}

\section{Irreducibility}

The main result in this section is Theorem \ref{Theo3}, where we characterize the hyperspace $\mathcal{NB}(\mathcal{F}_1(X))$, when $X$ is a decomposable irreducible continuum.

\begin{lemma}\label{Lema0}
	Let $X$ be a continuum. Then $\{X\setminus \kappa(x) : x\in X\}\cap 2^X\subseteq \mathcal{NB}(\mathcal{F}_1(X))$.
\end{lemma}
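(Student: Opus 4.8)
The plan is to fix a point $x \in X$ for which $B := X \setminus \kappa(x)$ actually lies in $2^X$ (the intersection with $2^X$ in the statement precisely restricts attention to these $x$, so I may assume $B$ is a nonempty closed set), and to show that $B$ does not block $\{y\}$ for every $y \in X \setminus B = \kappa(x)$. Since membership in $\mathcal{NB}(\mathcal{F}_1(X))$ is exactly the requirement that $B$ not block each point of its complement, this suffices. The whole argument will run through Proposition \ref{pro:nonb}: I will verify condition (d) (or, equivalently, produce the order arc of condition (b)) for each $y \in \kappa(x)$.

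So fix $y \in \kappa(x)$. By the very definition of the composant $\kappa(x) = \bigcup\{L : L \text{ is a proper subcontinuum of } X \text{ and } x \in L\}$, there is a proper subcontinuum $K$ of $X$ with $\{x,y\} \subseteq K \subsetneq X$. (The degenerate possibility $x=y$ with $\kappa(x)=\{x\}$ cannot occur here: it would force $\{x\}$ to be open, contradicting $B = X\setminus\kappa(x)\in 2^X$ in a nondegenerate continuum; so one still obtains such a $K$.) Now apply Theorem \ref{orderarc} to the pair $K \subsetneq X$ to get an order arc $\beta : [0,1] \to \mathcal{C}(X)$ with $\beta(0)=K$ and $\beta(1)=X$. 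The key observation is that for every $t \in [0,1)$ the set $\beta(t)$ is a \emph{proper} subcontinuum (strict monotonicity of the order arc gives $\beta(t)\subsetneq\beta(1)=X$) that contains $x$ (since $\beta(t)\supseteq\beta(0)=K\ni x$); hence $\beta(t)\subseteq\kappa(x)=X\setminus B$ by the definition of the composant, and moreover $y\in K\subseteq\beta(t)$.

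It remains to check density. Each $\beta(t)$ with $t<1$ is therefore a subcontinuum containing $y$ and contained in $X\setminus B$, so $\bigcup_{t<1}\beta(t)$ is contained in the union appearing in Proposition \ref{pro:nonb}(d). This union is dense in $X$: given any nonempty open $U\subseteq X$, the set $\langle X,U\rangle$ is a Vietoris-open neighborhood of $\beta(1)=X$, so continuity of $\beta$ yields some $t_0<1$ with $\beta(t_0)\cap U\neq\emptyset$, whence $U$ meets $\bigcup_{t<1}\beta(t)$. Thus $\bigcup\{C\in\mathcal{C}(X):y\in C\subseteq X\setminus B\}$ is dense, and by the equivalence of (a) and (d) in Proposition \ref{pro:nonb}, $B$ does not block $\{y\}$. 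If one prefers to exhibit the order arc of condition (b) explicitly, it suffices to prepend to $\beta$ an order arc from $\{y\}$ to $K$ (again from Theorem \ref{orderarc}, using $\{y\}\subsetneq K$), which stays inside $K\subseteq\kappa(x)$, and then to concatenate and reparametrize.

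I do not expect a serious obstacle; the one point that genuinely needs care is the simultaneous control of the approximating subcontinua, namely arranging that every level $\beta(t)$ (for $t<1$) both contains $y$ and remains inside $\kappa(x)$. This is exactly where the characterization of $\kappa(x)$ as the union of \emph{proper} subcontinua through $x$ interacts with the fact that an order arc stays proper until it reaches $X$; choosing $K$ to contain both $x$ and $y$ is what ties the two requirements together. The density step is then routine Vietoris convergence, and the passage from (d) back to the definition of non-blocking is immediate from Proposition \ref{pro:nonb}.
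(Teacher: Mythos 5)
Your proof is correct, and it reaches the conclusion by a different mechanism than the paper. The paper's proof invokes the structure theorem for composants (\cite[Proposition~11.14, p.~203]{Nadler}): it writes $\kappa(p)=\bigcup_{n\in\N}L_n$ as a countable increasing union of proper subcontinua containing $p$, picks $L_k\ni z$, and uses the density of the composant to verify condition (c)/(d) of Proposition~\ref{pro:nonb} with the tail $(L_m)_{m\geq k}$. You instead pick a single proper subcontinuum $K$ containing both $x$ and $y$ (which exists by the very definition of $\kappa(x)$), run an order arc $\beta$ from $K$ to $X$ via Theorem~\ref{orderarc}, and observe that every level $\beta(t)$, $t<1$, is a proper subcontinuum containing $x$, hence sits inside $\kappa(x)$, while still containing $y$; density of $\bigcup_{t<1}\beta(t)$ then falls out of Vietoris continuity of $\beta$ at $t=1$ rather than being quoted as a property of composants. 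The trade-off: the paper's argument is shorter because Nadler's proposition delivers both the increasing exhaustion and the density in one citation; yours is more self-contained relative to the tools actually stated in the paper (Theorem~\ref{orderarc} and Proposition~\ref{pro:nonb}), needing neither the countable-union structure of composants nor the fact that composants are dense. Your parenthetical about the degenerate case $\kappa(x)=\{x\}$ is unnecessary (if $y\in\kappa(x)$ then a proper subcontinuum through $x$ and $y$ exists by definition of the composant), but it does no harm.
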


\begin{proof}
	Let $p\in X$ be such that $X\setminus\kappa(p)\in 2^X$. Let $z\in \kappa(p)$. We are going to prove that $X \setminus \kappa(p)$ does not block $\{z\}$. It is well known that $\kappa(p)=\bigcup_{n\in\N}L_n$ such that $L_n\in\mathcal{C}(X)\setminus\{X\}$, $p\in L_n$ and $L_n \subseteq L_{n+1}$ for each $n\in\N$ \cite[Proposition~11.14, p.~203]{Nadler}. Let $k \in\N$ be such that $z\in L_{k}$. Since $\kappa(p)$ is dense, we have that $\cl_X(\{L_m : m \geq k\})=X$. So, by Proposition~®\ref{pro:nonb}, $X\setminus \kappa(p)\in \mathcal{NB}(\mathcal{F}_1(X))$, since $z\in L_m \in\mathcal{C}(X)$ and $(X\setminus\kappa(p))\cap L_m=\emptyset$ for each $m \geq k$. Therefore, $\{X\setminus \kappa(x) : x\in X\}\cap 2^X\subseteq \mathcal{NB}(\mathcal{F}_1(X))$.
\end{proof}

\begin{lemma}\label{Lema2}
	Let $X$ be an irreducible continuum. If $p\in X$ is such that $X\setminus \kappa(p)$ is nonempty compact and $X\setminus\kappa(p)\subseteq \int_X(A)$, for some $A\in\mathcal{C}(X)$, then $A$ is decomposable.
\end{lemma}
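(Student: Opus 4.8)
The plan is to argue by contradiction: assume $A$ is indecomposable and derive a contradiction with the hypotheses, mimicking the composant analysis used in Claim \ref{claim5}. Throughout write $K=X\setminus\kappa(p)$, a nonempty compact set with $K\subseteq\int_X(A)$.

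First I would dispose of the case $A=X$. If $A=X$ were indecomposable, its composants would be pairwise disjoint and dense, so $K$ would be a union of composants distinct from $\kappa(p)$, hence dense; being a proper dense subset it could not be closed, contradicting the compactness of $K$. Thus $A$ is a proper subcontinuum and $\int_X(A)\supseteq K\neq\emptyset$. Next I fix $q\in K$. Since $q\notin\kappa(p)$, no proper subcontinuum of $X$ contains both $p$ and $q$; that is, $X=\mathrm{irr}\{p,q\}$. As $q\in\int_X(A)$ and $A$ is proper, $p\notin A$ (otherwise $A$ would be a proper subcontinuum containing $p$ and $q$). I would then show $X\setminus A$ is connected: letting $U$ be the component of the open set $X\setminus A$ containing $p$, the subcontinuum $\cl_X(U)$ meets $\Bd_X(A)$, so $\cl_X(U)\cup A$ is a subcontinuum containing $p$ and $q$ and hence equals $X$ by irreducibility; this forces $X\setminus A=U$. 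Consequently $B:=\cl_X(X\setminus A)$ is a proper subcontinuum (proper because $\int_X(A)\neq\emptyset$) with $p\in B$, $A\cap B=\Bd_X(A)$ and $B\cap\int_X(A)=\emptyset$.

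The heart of the argument is a dichotomy on the composants of $A$ relative to $\Bd_X(A)$. Suppose first that every composant of $A$ meets $\Bd_X(A)$; in particular the composant of $q$ in $A$ contains a point $w\in\Bd_X(A)$, so there is a proper subcontinuum $S$ of $A$ with $q,w\in S$. Since $A$ is indecomposable, $S$ is nowhere dense in $A$, so $S\not\supseteq\int_X(A)$; as $B\cap\int_X(A)=\emptyset$, the subcontinuum $S\cup B$ (connected because $w\in S\cap B$) is proper and contains both $p$ and $q$, contradicting $X=\mathrm{irr}\{p,q\}$. Suppose instead that some composant $\kappa_A$ of $A$ misses $\Bd_X(A)$, so $\kappa_A\subseteq\int_X(A)$. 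I then claim $\kappa_A\subseteq K$: if some $x\in\kappa_A$ lay in $\kappa(p)$, a proper subcontinuum $M$ of $X$ would join $p$ to $x$, and the component of $M\cap A$ containing $x$ would, by the boundary bumping theorem, be a proper subcontinuum of $A$ through $x$ meeting $\Bd_X(A)$ (proper, since otherwise $A\subseteq M$ would place $q$ in the proper subcontinuum $M$ together with $p$); this subcontinuum lies in $\kappa_A$ yet meets $\Bd_X(A)$, a contradiction. Hence $\kappa_A\subseteq K$. But $\kappa_A$ is dense in $A$ and $K$ is closed, so $A=\cl_X(\kappa_A)\subseteq K\subseteq\int_X(A)$, making $A$ both open and closed, whence $A=X$ — contradicting that $A$ is proper.

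The main obstacle is the second horn of the dichotomy: upgrading the single hypothesis that one composant of $A$ avoids $\Bd_X(A)$ to the strong conclusion $\kappa_A\subseteq K$. This is exactly where both the boundary bumping theorem (to drive the auxiliary subcontinuum $M\cap A$ back to $\Bd_X(A)$) and, crucially, the compactness of $K=X\setminus\kappa(p)$ enter, the latter being what converts the density of $\kappa_A$ in $A$ into $A\subseteq K$. Establishing the connectedness of $X\setminus A$ from irreducibility is the other delicate point, since it is what makes $B$ a continuum attached to $A$ precisely along $\Bd_X(A)$, a fact used in both horns.
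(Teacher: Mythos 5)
Your proof is correct and follows essentially the same strategy as the paper's: assume $A$ is indecomposable, rule out $A=X$ via the compactness of $X\setminus\kappa(p)$, show $X\setminus A$ is connected so that $B=\cl_X(X\setminus A)$ is a proper subcontinuum meeting $A$ exactly in $\Bd_X(A)$, and then split into two cases according to whether the composants of $A$ reach $\Bd_X(A)$, obtaining in the first case a proper subcontinuum joining $p$ and $q$ (contradicting irreducibility) and in the second a contradiction driven by the compactness of $X\setminus\kappa(p)$. The only deviations are organizational: by phrasing the dichotomy over \emph{all} composants of $A$ (rather than the specific composant $\sigma$ containing $X\setminus\kappa(p)$) you avoid the paper's appeal to Kuratowski for the connectedness of $X\setminus\kappa(p)$, and your second case ends by forcing $A$ to be clopen instead of invoking the non-compactness of composants of an indecomposable continuum.
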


\begin{proof}
	Suppose the contrary that $A$ is indecomposable. Note that if $A=X$, then $X\setminus\kappa(p)$ is not compact \cite[Theorem~5, p.~212]{Kuratowski1968II}. Hence, $A\neq X$. Observe that $X\setminus A$ is connected, by \cite[Theorem~3, p.~193]{Kuratowski1968II}. Thus, $X=A\cup B$, where $B=\cl_X(X\setminus A)$. Furthermore, $X\setminus\kappa(p)$ is connected, by \cite[Theorem~3, p.~210]{Kuratowski1968II}. Let $\sigma$ be the composant of $A$ such that $X\setminus\kappa(p)\subseteq \sigma$. We consider two cases: \medskip
	
\noindent \textbf{Case 1.} $\sigma\cap B\neq\emptyset$. \medskip

Since $A$ is indecomposable, it is not difficult to see that there exists $L\in\mathcal{C}(A)$ such that: $(i)\ (X\setminus\kappa(p))\cap L\neq\emptyset$; $ (ii)\ L\cap B\neq\emptyset$; and $(iii)\ \int_A(L)=\emptyset$. Thus, we have that $p \in L\cup B\in\mathcal{C}(X)$ and $(L\cup B)\cap (X\setminus\kappa(p))\neq\emptyset$. Therefore, $X=L\cup B$ and $A\setminus B\subseteq L$. This contradicts $(iii)$. \medskip

\noindent \textbf{Case 2.} $\sigma\cap B=\emptyset$. \medskip

Note that $X$ is irreducible between $p$ and any point of $\sigma$. Therefore, $\sigma\subseteq X\setminus\kappa(p)$ and so $X\setminus\kappa(p)=\sigma$. This contradicts the fact that any composant of $A$ is not compact \cite[Theorem 5, p.212]{Kuratowski1968II}. \medskip

Therefore, $A$ is decomposable.
\end{proof}

\begin{lemma}\label{Lema2.5}
Let $X$ be a decomposable continuum such that $X=\mathrm{irr}\{p,q\}$, for some $p,q\in X$. Then there exist a subcontinuum $L$ of $X$ and non-empty disjoint open connected subsets $U$ and $V$ of $X$ such that $\int_X(L)\neq\emptyset$, 
$X \setminus L = U \cup V$, $X \setminus \kappa(p) \subseteq U$ and $X \setminus \kappa(q) \subseteq V$.
\end{lemma}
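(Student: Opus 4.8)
The plan is to reduce the statement to the existence of a single separating subcontinuum coming from the layer structure of $X$. First I would fix a decomposition $X=A\cup B$ into proper subcontinua. Since $X=\mathrm{irr}\{p,q\}$, no proper subcontinuum contains both $p$ and $q$, so after relabeling we may assume $p\in A\setminus B$ and $q\in B\setminus A$. Because $A$ is a proper subcontinuum containing $p$ we get $A\subseteq\kappa(p)$, and likewise $B\subseteq\kappa(q)$; hence $\kappa(p)\cup\kappa(q)=A\cup B=X$. Consequently $X\setminus\kappa(p)$ and $X\setminus\kappa(q)$ are disjoint, with $X\setminus\kappa(p)\subseteq X\setminus A$ and $X\setminus\kappa(q)\subseteq X\setminus B$. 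This records the two complements on opposite ``sides'' of the continuum and shows that the two required inclusions are at least compatible with disjoint $U$ and $V$.

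The engine of the proof is the decomposition of the irreducible continuum $X$ into its maximal layers (Kuratowski); for a decomposable irreducible continuum of the kind at hand this yields a monotone surjection $f\colon X\to[0,1]$ with $f(p)=0$ and $f(q)=1$, whose fibres $f^{-1}(t)$ are subcontinua. The point is to locate the two complements in the terminal fibres. For every $t<1$ the set $f^{-1}([0,t])$ is a proper subcontinuum containing $p$ (it misses $q$), so $f^{-1}([0,t])\subseteq\kappa(p)$; taking the union over $t<1$ gives $f^{-1}([0,1))\subseteq\kappa(p)$, that is, $X\setminus\kappa(p)\subseteq f^{-1}(1)$. Symmetrically $X\setminus\kappa(q)\subseteq f^{-1}(0)$.

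With this in hand the construction is immediate. I would choose $a,b$ with $0<a<b<1$ and set $L=f^{-1}([a,b])$, $U=f^{-1}((b,1])$ and $V=f^{-1}([0,a))$. Monotonicity of $f$ makes $L$ a subcontinuum and makes $U$ and $V$ connected (each is an increasing union of preimages of subintervals sharing a common fibre); $U$ and $V$ are open as preimages of relatively open subsets of $[0,1]$, they are disjoint, and $U\cup V=X\setminus L$. Moreover $f^{-1}((a,b))$ is a non-empty open subset of $L$, so $\int_X(L)\neq\emptyset$. Finally $f^{-1}(1)\subseteq U$ and $f^{-1}(0)\subseteq V$, so by the previous paragraph $X\setminus\kappa(p)\subseteq U$ and $X\setminus\kappa(q)\subseteq V$, as required.

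The main obstacle is the structural input of the second paragraph: producing the monotone map onto an arc and, above all, guaranteeing that the two composant complements really fall into the terminal fibres rather than being spread densely through $X$. This is exactly where decomposability must be used essentially: an indecomposable subcontinuum with non-empty interior would tend to force a composant complement to be dense, destroying the interior of $L$. The delicate step is therefore to control any fibre (layer) with non-empty interior, for which I would invoke Lemma~\ref{Lema2} together with the Kuratowski facts on composants already cited (in particular that no composant of an indecomposable continuum is compact). Confirming that such layers with interior can be absorbed into $L$, and do not separate $X\setminus\kappa(p)$ from $X\setminus\kappa(q)$, is the technical heart of the argument.
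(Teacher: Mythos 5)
Your route is genuinely different from the paper's: the paper never builds a map onto an arc, but instead splits $X=C\cup D$, normalizes $C=\cl_X(X\setminus D)$, applies Lemma~\ref{Lema2} to split $C=E\cup F$ once more with $X\setminus\kappa(q)\subseteq E\setminus F$, sets $L=F$, and obtains $U$ and $V$ from Kuratowski's theorem on complements of subcontinua of irreducible continua. The trouble is that the engine of your argument --- the monotone surjection $f\colon X\to[0,1]$ with $f(p)=0$, $f(q)=1$ and connected fibres --- is precisely the step you defer, and it need not exist. By the theorem of Thomas quoted in this paper \cite[Theorem 10, p.15]{Thomas}, such a layer structure exists exactly when $X$ is of type $\lambda$, i.e., when every indecomposable subcontinuum of $X$ has empty interior, and this is \emph{not} implied by decomposability of $X$. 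Concretely, glue two indecomposable continua $K_1$ and $K_2$ at a single point $c$, and pick $p\in K_1\setminus\kappa_{K_1}(c)$ and $q\in K_2\setminus\kappa_{K_2}(c)$. Then $X=K_1\cup K_2$ is decomposable and $X=\mathrm{irr}\{p,q\}$, yet $X$ admits no monotone surjection onto $[0,1]$ at all: every subcontinuum of $X$ meets each $K_i$ in a connected set (a subcontinuum meeting both $K_1\setminus\{c\}$ and $K_2\setminus\{c\}$ must contain the separating point $c$), so such an $f$ would restrict to a monotone surjection of $K_1$ or of $K_2$ onto a nondegenerate subinterval of $[0,1]$, contradicting the fact that monotone images of indecomposable continua are indecomposable.

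Worse, in this same example the conclusion of the lemma itself fails, so the ``delicate step'' you postpone cannot be completed by any argument. Here every proper subcontinuum of $X$ containing $p$ is either contained in $K_1$ or equals $K_1\cup L$ with $L$ a proper subcontinuum of $K_2$ containing $c$; hence $\kappa(p)=K_1\cup\kappa_{K_2}(c)$ and $X\setminus\kappa(p)=K_2\setminus\kappa_{K_2}(c)$, which is dense in $K_2$, and symmetrically $X\setminus\kappa(q)$ is dense in $K_1$. Consequently, for any open sets $U\supseteq X\setminus\kappa(p)$ and $V\supseteq X\setminus\kappa(q)$, the union $U\cup V$ is dense, forcing $\int_X(L)=\emptyset$ for $L=X\setminus(U\cup V)$. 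What is really missing is compactness of $X\setminus\kappa(p)$ and $X\setminus\kappa(q)$ (i.e., that they belong to $2^X$): this is an explicit hypothesis of Lemma~\ref{Lema2}, it is exactly what fails in the example, and your sketch never secures it --- nor, it should be said, does the paper's own proof, which invokes Lemma~\ref{Lema2} at the analogous point without verifying that hypothesis; the statement is only safe under the additional assumption, present where it is ultimately used in Theorem~\ref{Theo3}(1), that these composant complements are closed. Under that added hypothesis your layer construction could plausibly be repaired, but as written the proposal rests on a structure ($f$) that does not exist for the continua the lemma claims to cover.
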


\begin{proof}
Let $C$ and $D$ be proper subcontinua of $X$ such that $X=C\cup D$. Since $X=\mathrm{irr}\{p,q\}$, we may suppose that $X\setminus \kappa(q)\subseteq C\setminus D$ and $X\setminus\kappa(p)\subseteq D\setminus C$. Notice that $X\setminus D$ is connected, by \cite[Theorem 3, p.193]{Kuratowski1968II}. Hence, without loss of generality, we may suppose that $C=\cl_X(X\setminus D)$. By Lemma \ref{Lema2}, $C$ is decomposable. Hence, there are proper subcontinua $E$ and $F$ of $C$ such that $C=E\cup F$. Since $X=\mathrm{irr}\{p,q\}$ and $X\setminus\kappa(p)\subseteq D\setminus C$, we have that either $X\setminus\kappa(q)\subseteq E\setminus F$ or $X\setminus\kappa(q)\subseteq F\setminus E$. Suppose that $X\setminus\kappa(q)\subseteq E\setminus F$. Set $L = F$.

Now, observe that if $X\setminus L$ is connected, then $\cl_X(X\setminus L)$ is a proper subcontinuum containing $\{p,q\}$; contradicting the fact that $X=\mathrm{irr}\{p,q\}$. Hence, $X\setminus L$ is disconnected. Furthermore, there exist two disjoint and open connected subsets $U$ and $V$ of $X$ such that $X\setminus L=U\cup V$, $X\setminus \kappa(q)\subseteq U$ and $X\setminus\kappa(p)\subseteq V$, by \cite[Theorem~3, p.~193]{Kuratowski1968II}. Therefore, $L$, $U$ and $V$ satisfy all our requirements.
\end{proof}

\begin{lemma}\label{Lema3}
Let $X$ be a decomposable continuum such that $X=\mathrm{irr}\{p,q\}$, for some $p,q\in X$. Let $A\in 2^X$. Then:
\begin{enumerate}
\item If $A$ does not block $\{x\}$, for any $x\in\kappa(p)\cap\kappa(q)$ such that $x\notin A$, then $A\subseteq \cl_X(X\setminus\kappa(p))\cup\cl_X(X\setminus \kappa(q))$.
\item If $A\subseteq (X\setminus\kappa(p))\cup(X\setminus \kappa(q))$, then $A$ does not block $\{x\}$, for any $x\in\kappa(p)\cap\kappa(q)$.
\end{enumerate} 
\end{lemma}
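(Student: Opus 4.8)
The plan is to prove the two implications separately, using the structural decomposition provided by Lemma \ref{Lema2.5} as the backbone in each direction. First I would invoke Lemma \ref{Lema2.5} to fix a subcontinuum $L$ with $\int_X(L)\neq\emptyset$ and disjoint open connected sets $U,V$ such that $X\setminus L=U\cup V$, with $X\setminus\kappa(p)\subseteq U$ and $X\setminus\kappa(q)\subseteq V$. The key observation tying this to the statement is that $\kappa(p)\cap\kappa(q)$ consists precisely of points lying in proper subcontinua through both $p$ and $q$, and such points must sit inside $L$ together with its boundary structure; more precisely, a point $x\in\kappa(p)\cap\kappa(q)$ cannot lie in $X\setminus\kappa(p)$ nor in $X\setminus\kappa(q)$, so $x\in\kappa(p)\cap\kappa(q)\subseteq L\cup(U\cap\kappa(p))\cup(V\cap\kappa(q))$, and one checks that in fact these points concentrate near $L$.

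For part (2), the easier direction, I would assume $A\subseteq(X\setminus\kappa(p))\cup(X\setminus\kappa(q))$ and take any $x\in\kappa(p)\cap\kappa(q)$. The goal is to exhibit, via the equivalence (a)$\Leftrightarrow$(d) of Proposition \ref{pro:nonb}, that $\bigcup\{C\in\mathcal{C}(X):x\in C\subseteq X\setminus A\}$ is dense. Since $x\in\kappa(p)$, there is an increasing sequence of proper subcontinua $L_n$ through $p$ with $x\in L_n$ whose union is dense in $X$ (using \cite[Proposition~11.14, p.~203]{Nadler} as in Lemma \ref{Lema0}); similarly for $\kappa(q)$. Because $A$ avoids $\kappa(p)$ and $\kappa(q)$ on the respective pieces, I would arrange continua through $x$ that miss $A$ by combining tails of these two families, exploiting that $X\setminus\kappa(p)$ and $X\setminus\kappa(q)$ are the only places $A$ can live. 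The density then follows because the union of the two composants is dense (each composant is dense in an irreducible continuum), giving nonblocking by Proposition \ref{pro:nonb}.

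For part (1), the contrapositive is cleaner: I would suppose $A\not\subseteq\cl_X(X\setminus\kappa(p))\cup\cl_X(X\setminus\kappa(q))$ and produce a point $x\in\kappa(p)\cap\kappa(q)\setminus A$ that $A$ does block. Pick $a\in A$ with $a\notin\cl_X(X\setminus\kappa(p))\cup\cl_X(X\setminus\kappa(q))$, so $a\in\kappa(p)\cap\kappa(q)$ and $a$ has a neighborhood disjoint from both $X\setminus\kappa(p)$ and $X\setminus\kappa(q)$; hence $a\in L$ by the decomposition. The strategy is to show this $a$ separates $X$ in the sense needed: any subcontinuum through a suitable nearby $x\in\kappa(p)\cap\kappa(q)$ that reaches a dense portion of $X$ must pass through $A$. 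I would use that $X$ is irreducible between $p$ and $q$, so $L$ genuinely separates $U$ from $V$, and a continuum from $x$ accumulating densely must cross from the $U$-side to the $V$-side, forcing it to meet $L$ near $a\in A$.

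The main obstacle I anticipate is part (1): translating "$A$ meets the interior region near $L$" into a genuine blocking statement requires controlling \emph{all} subcontinua through a candidate point $x$, not just one convenient family. The delicate point is choosing $x\in\kappa(p)\cap\kappa(q)$ so that every subcontinuum through $x$ whose closure of their union is dense is forced through the offending point of $A$; this needs the irreducibility of $X$ between $p$ and $q$ to guarantee that reaching both the $p$-end and the $q$-end of $X$ is unavoidable for a dense-union family, and that the separating set $L$ (where $A$ intrudes) cannot be bypassed. Handling the closure operators $\cl_X(X\setminus\kappa(p))$ versus the open sets $X\setminus\kappa(p)$ in the two parts — the asymmetry between (1) and (2) — is where the technical care concentrates.
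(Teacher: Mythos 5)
Your proof of part (2) rests on a misreading of the hypothesis, and this is fatal to the construction. The condition $A\subseteq (X\setminus\kappa(p))\cup(X\setminus\kappa(q))$ says only that $A$ misses $\kappa(p)\cap\kappa(q)$; it does \emph{not} say that $A$ misses $\kappa(p)$ and misses $\kappa(q)$. Since $X$ is decomposable and irreducible between $p$ and $q$, we have $\kappa(p)\cup\kappa(q)=X$ and $X\setminus\kappa(q)\subseteq\kappa(p)$, so in the typical case --- e.g.\ $A=X\setminus\kappa(q)$, which is precisely the case needed for Theorem \ref{Theo3} --- the set $A$ lies entirely inside $\kappa(p)$. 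Consequently your increasing sequence $(L_n)$ of proper subcontinua containing $x$ and $p$ with $\bigcup_n L_n=\kappa(p)$ must eventually (indeed, may always) intersect $A$: in the arc $[0,1]$ with $p=0$, $q=1$, $A=\{0\}$, \emph{every} such $L_n$ contains $0\in A$, so no ``tail'' of this family avoids $A$. Moreover, ``combining'' a continuum through $\{x,p\}$ with a continuum through $\{x,q\}$ yields a subcontinuum containing both $p$ and $q$, which by irreducibility is $X$ itself and hence meets $A$. The paper's proof avoids exactly this trap: it writes $X=C\cup D$ with $X\setminus\kappa(q)\subseteq C$ and $X\setminus\kappa(p)\subseteq D$, takes order arcs $\alpha_1,\alpha_2$ from $\{x\}$ to $C$ and to $D$, truncates each at the \emph{first} time $t_i$ it meets $A\cup\{p,q\}$, and sets $\lambda(t)=\alpha_1(t_1t)\cup\alpha_2(t_2t)$. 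Then $\lambda(t)\cap A=\emptyset$ for $t<1$, while $\lambda(1)$ is a subcontinuum meeting both $X\setminus\kappa(q)$ and $X\setminus\kappa(p)$, and irreducibility forces $\lambda(1)=X$ (no proper subcontinuum can meet both of those sets). The essential idea is to stop \emph{before} reaching $p$, $q$ or $A$, not to pass through $p$ and $q$.

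For part (1) you have restated the difficulty rather than resolved it, and two of your concrete assertions are false. First, a point $a\in A\setminus(\cl_X(X\setminus\kappa(p))\cup\cl_X(X\setminus\kappa(q)))$ need not lie in $L$: in $[0,1]$ with $L=[1/3,2/3]$, any $a\in(2/3,1)$ lies in $\kappa(p)\cap\kappa(q)$, avoids both closures, and lies in $U$, not in $L$; likewise $\kappa(p)\cap\kappa(q)$ is dense, so these points do not ``concentrate near $L$.'' Second, while every subcontinuum crossing from $U$ to $V$ must meet $L$, nothing forces it to meet $L$ ``near $a$,'' so no blocking conclusion follows from your argument. The paper's proof of (1) is a contradiction argument with real content: given a bad point $z\in A\cap\kappa(p)\cap\kappa(q)$, it passes to the component $H$ of $A$ containing $z$, shows $\int_X(H)=\emptyset$ and that $H$ misses $\cl_X(U)$ or $\cl_X(V)$, and then splits into cases according to whether $H$ meets $\cl_X(X\setminus\kappa(q))$ and whether $\cl_X(X\setminus\kappa(q))\setminus(X\setminus\kappa(q))\subseteq H$. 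In each case it applies the non-blocking \emph{hypothesis} to carefully chosen auxiliary points of $\kappa(p)\cap\kappa(q)\setminus A$ (produced via boundary bumping and irreducible subcontinua), and uses the irreducibility of $X$ to conclude either that $z\in\cl_X(X\setminus\kappa(q))$, or that a proper subcontinuum contains $\{p,q\}$, or --- via a Baire category argument exploiting that $X\setminus\kappa(q)$ is a $G_\delta$ set --- that $X\setminus\kappa(q)$ has nonempty interior; each is a contradiction. None of these mechanisms (components of $A$, boundary bumping, the $G_\delta$/Baire step, the use of the hypothesis at auxiliary points) appears in your sketch, and without them the implication does not follow.
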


\begin{proof}
By Lemma \ref{Lema2.5}, there exist a subcontinuum $L$ of $X$ and non-empty disjoint open connected subsets $U$ and $V$ of $X$ such that $\int_X(L)\neq\emptyset$, $X \setminus L = U \cup V$, $X \setminus \kappa(p) \subseteq U$ and $X \setminus \kappa(q) \subseteq V$. Thus, $X=U\cup L\cup V$, where $U, L$ and $V$ are pairwise disjoint connected with non-empty interior subsets of $X$.

\bigskip

We prove \textit{(1)}. Suppose that $A$ does not block $\{x\}$, for any $x\in\kappa(p)\cap\kappa(q)\cap(X\setminus A)$. We show that $A\subseteq \cl_X(X\setminus\kappa(p))\cup\cl_X(X\setminus \kappa(q))$. Suppose the contrary that there exists 
\begin{equation}\label{eq1}
z\in A\cap (X\setminus (\cl_X(X\setminus\kappa(p))\cup\cl_X(X\setminus \kappa(q))))\subseteq A\cap\kappa(p)\cap\kappa(q).
\end{equation} 
Let $H$ be the component of $A$ such that $z\in H$. Note that $\int_X(H)=\emptyset$, by \cite[Proposition 1.1 (b)]{Illanes2011}. Also, observe that if $H\cap \cl_X(U)\neq\emptyset$ and $H\cap \cl_X(V)\neq\emptyset$, then $X=\cl_X(U)\cup H\cup \cl_X(V)$, since $X=\mathrm{irr}\{p,q\}$. It implies that $\int_X(L)\subseteq H$; a contradiction. Thus, we have that either $H\cap \cl_X(V)=\emptyset$ or $H\cap \cl_X(U)=\emptyset$. We only analyze the case when $H\cap \cl_X(V)=\emptyset$. In the following $H\subseteq U\cup L$. We consider two cases: \medskip

\noindent \textbf{Case 1} $H\cap\cl_X(X\setminus\kappa(q))\neq\emptyset$. \medskip 

Let $I=cl_X(X\setminus\kappa(q))\setminus (X\setminus\kappa(q))$. It is possible that $I=\emptyset$. Now we consider two more options: \medskip

\noindent \textbf{Subcase (i)} $I\subseteq H$. \medskip 

Since $X\setminus \kappa(q)$ is a $G_{\delta}$-set (see \cite[Proposition 11.14]{Nadler}), and $H$ is a continuum with empty interior, we have that $\cl_X(X\setminus\kappa(q))\cup H$ is a $G_{\delta}$-set. Since $z\in\kappa(q)$, there exists a proper subcontinuum $R$ of $X$ such that $\{z,q\}\subseteq R$. Hence, $X=\cl_X(X\setminus\kappa(q))\cup H\cup R$, because $X=\mathrm{irr}\{p,q\}$. Thus, $X\setminus R\subseteq \cl_X(X\setminus\kappa(q))\cup H$. Therefore, $X\setminus \kappa(q)$ has non-empty interior, by \cite[Theorem 25.3]{Willard}; but this contradicts \cite[Theorem~2, p.~209]{Kuratowski1968II}. \medskip

\noindent \textbf{Subcase (ii)} $I \setminus H \neq \emptyset$. \medskip

Let $w\in I\setminus H$. Since $w\in \kappa(q)$, there exists a proper subcontinuum $G$ of $X$ such that $\{w,q\}\subseteq G$. Furthermore, since $H$ is a component of $A$, it is not difficult to prove that there exists a subcontinuum $J$ of $G$ such that $w\in J\subseteq X\setminus \{z\}$ and $J\cap \kappa(p)\cap\kappa(q)\cap (X\setminus A)\neq\emptyset$. Let $w'\in J\cap \kappa(p)\cap\kappa(q)\cap (X\setminus A)$. We know that $A$ does not block $\{w'\}$; i.e., there is an order arc $\alpha\colon [0,1]\to \mathcal{C}(X)$ such that $\alpha(0)=\{w'\}, \ \alpha(1)=X$ and $A\cap \alpha(t)=\emptyset$, for each $t<1$. Let $t_0=\min\{t\in [0,1] : \alpha(t)\cap \cl_X(V)\neq\emptyset\}$. Notice that $t_0<1$ and $z\notin \alpha(t_0)$. Furthermore, $\cl_X(X\setminus\kappa(q))$ is a subcontinuum of $X$, by \cite[Theorem 3, p. 210]{Kuratowski1968II}. Thus, by the irreducibility of $X$, $X=\cl_X(X\setminus\kappa(q))\cup J\cup\alpha(t_0)\cup \cl_X(V)$. Therefore, $z\in \cl_X(X\setminus\kappa(q))$ which contradicts (\ref{eq1}). \medskip

\noindent \textbf{Case 2.} $H\cap \cl_X(X\setminus\kappa(q))=\emptyset$. \medskip

Since $H$ is a component of $A$, there exists a proper subcontinuum $E$ of $X$ such that $\cl_X(X\setminus\kappa(q))\subseteq E\subseteq X\setminus H$ and $E\cap\kappa(q)\cap\kappa(p)\cap X\setminus A\neq\emptyset$, by \cite[Theorem 5.4]{Nadler}. Let $y\in E\cap\kappa(q)\cap\kappa(p)\cap X\setminus A$. Since $A$ does not block $\{y\}$, we have an order arc $\sigma\colon [0,1]\to\mathcal{C}(X)$ such that $\alpha(0) = \{y\}$, $\alpha(1) = X$ and $\alpha(t) \cap A = \emptyset$ for each $t \in [0,1)$. So, there exists $s_0\in [0,1)$ such that $\sigma(s_0)$ is a subcontinuum of $X$ intersecting both $E$ and $\cl_X(V)$, and $\sigma(s_0)\cap A=\emptyset$. Since $H\subseteq A$, $H\cap (E\cup \sigma(s_0)\cup \cl_X(V))=\emptyset$; but $E\cup \sigma(s_0)\cup \cl_X(V)$ is a subcontinuum of $X$ such that $\{p,q\}\subseteq E\cup \sigma(s_0)\cup \cl_X(V)$. A contradiction. \medskip

Therefore, we contradict (\ref{eq1}), by (a) and (b), and $A\subseteq \cl_X(X\setminus\kappa(p))\cup\cl_X(X\setminus \kappa(q))$. We complete the first part of the lemma. \medskip

We prove \textit{(2)}. Let $x\in\kappa(p)\cap\kappa(q)$. Let $C$ and $D$ be proper subcontinua of $X$ such that $x\in C\cap D$ and $X=C\cup D$. Without loss of generality, we may suppose that $X\setminus\kappa(q)\subseteq C$ and $X\setminus \kappa(p)\subseteq D$. Let $\alpha_1,\alpha_2\colon [0,1]\to \mathcal{C}(X)$ be an order arc such that $\alpha_1(0)=\alpha_2(0)=\{x\}, \ \alpha_1(1)=C$ and $\alpha_2(1)=D$. Let $t_1=\min\{t\in [0,1] : \alpha_1(t)\cap (A\cup\{p,q\})\neq\emptyset\}$ and $t_2=\min\{t\in [0,1] : \alpha_2(t)\cap (A\cup\{p,q\})\neq\emptyset\}.$ Let $\lambda\colon [0,1]\to \mathcal{C}(X)$ be defined by $$\lambda(t)=\alpha_1(t_1t)\cup\alpha_2(t_2t).$$ Notice that $\lambda(t)\cap A=\emptyset$, for each $t<1$. Furthermore, $\lambda(1)\cap X\setminus\kappa(p)\neq\emptyset$ and $\lambda(1)\cap X\setminus\kappa(q)\neq\emptyset$. Since $X=\mathrm{irr}\{p,q\}$, $\lambda(1)=X$. Therefore, $A$ does not block $\{x\}$, for any $x\in\kappa(p)\cap\kappa(q)$.

\end{proof}

\begin{lemma}\label{Lema4}
Let $X$ be a decomposable continuum such that $X=\mathrm{irr}\{p,q\}$, for some $p,q\in X$. Let $A\in 2^X$. If $A\cap X\setminus\kappa(q)\neq\emptyset$ and there is $w\in (X\setminus\kappa(q))\setminus A$, then $A$ blocks $\{w\}$.
\end{lemma}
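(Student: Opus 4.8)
The plan is to work through the characterization of nonblocking furnished by Proposition~\ref{pro:nonb}(d). Since $w\notin A$, the set $A$ fails to block $\{w\}$ precisely when $K:=\bigcup\{C\in\mathcal{C}(X):w\in C\subseteq X\setminus A\}$ is dense in $X$, so it suffices to prove that $K$ is \emph{not} dense. I would first record the two consequences of the hypotheses that drive everything: because $w\in X\setminus\kappa(q)$, no proper subcontinuum of $X$ contains both $w$ and $q$, i.e. $X=\mathrm{irr}\{w,q\}$; and I fix a point $a\in A\cap(X\setminus\kappa(q))$, which exists by assumption.

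The heart of the argument is the claim that every $C\in\mathcal{C}(X)$ with $w\in C$ and $C\cap\kappa(q)\neq\emptyset$ must contain all of $X\setminus\kappa(q)$, and in particular contains $a$. To see this, pick $y\in C\cap\kappa(q)$; by definition of the composant there is a proper subcontinuum $Q$ of $X$ with $\{y,q\}\subseteq Q$, and moreover $Q\subseteq\kappa(q)$, since every point of $Q$ lies in the proper subcontinuum $Q$ which contains $q$. Then $C\cup Q$ is a subcontinuum (they share $y$) containing both $w$ and $q$, so $X=\mathrm{irr}\{w,q\}$ forces $C\cup Q=X$. Intersecting with $X\setminus\kappa(q)$ and using $Q\subseteq\kappa(q)$ yields $X\setminus\kappa(q)\subseteq C$, whence $a\in C$.

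From this claim I deduce that any $C$ with $w\in C\subseteq X\setminus A$ must satisfy $C\cap\kappa(q)=\emptyset$: otherwise the claim would give $a\in C$, contradicting $C\subseteq X\setminus A$ together with $a\in A$. Hence $K\subseteq X\setminus\kappa(q)$. To finish, I invoke Lemma~\ref{Lema2.5} to obtain the subcontinuum $L$ and the disjoint open connected sets $U,V$ with $\int_X(L)\neq\emptyset$, $X\setminus L=U\cup V$ and $X\setminus\kappa(q)\subseteq V$. Since $U,L,V$ are pairwise disjoint and cover $X$, we have $L\subseteq X\setminus V\subseteq\kappa(q)$, so the nonempty open set $\int_X(L)$ is disjoint from $X\setminus\kappa(q)$ and therefore from $K$. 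Thus $K$ is not dense, so $A$ blocks $\{w\}$.

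I expect the main obstacle to be the key claim, namely the geometric fact that a subcontinuum through $w$ cannot reach into $\kappa(q)$ without swallowing the entire set $X\setminus\kappa(q)$. Once one hits on the device of absorbing a proper $q$-subcontinuum $Q\subseteq\kappa(q)$ and invoking $X=\mathrm{irr}\{w,q\}$, the remaining reduction to non-density via Lemma~\ref{Lema2.5} is routine. A small point to handle carefully is the inclusion $Q\subseteq\kappa(q)$, which is exactly what lets the intersection with $X\setminus\kappa(q)$ discard the $Q$-part and land the whole of $X\setminus\kappa(q)$ inside $C$.
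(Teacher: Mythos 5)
Your proof is correct, and it rests on the same central observation as the paper's: because $w\notin\kappa(q)$, the continuum $X$ is irreducible between $w$ and $q$, so a subcontinuum containing $w$, once you attach to it a proper subcontinuum containing $q$, must equal $X$ and therefore absorb $X\setminus\kappa(q)$ --- and with it a point of $A$. The routes differ only in packaging. The paper works with Proposition~\ref{pro:nonb}(b): it takes a decomposition $X=C\cup D$ with $q\in D$ (so $D\subseteq\kappa(q)$ and $X\setminus\kappa(q)\subseteq C\setminus D$), observes that any order arc $\alpha$ from $\{w\}$ to $X$ meets $D$ at some time $t_0<1$ because $\int_X(D)\neq\emptyset$, and applies the irreducibility argument to $\alpha(t_0)\cup D$ to conclude $\alpha(t_0)\cap A\neq\emptyset$. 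You work with Proposition~\ref{pro:nonb}(d) instead, proving the tidier local claim that every $C\in\mathcal{C}(X)$ with $w\in C$ and $C\cap\kappa(q)\neq\emptyset$ must contain $X\setminus\kappa(q)$; this gives $K\subseteq X\setminus\kappa(q)$, and non-density then follows from any nonempty open set contained in $\kappa(q)$. Your one piece of excess baggage is invoking Lemma~\ref{Lema2.5} to produce that open set: decomposability alone suffices, since writing $X=C\cup D$ with $q\in D$ gives $D\subseteq\kappa(q)$ and $\emptyset\neq X\setminus C\subseteq\int_X(D)$, an open set disjoint from $X\setminus\kappa(q)$. Dropping Lemma~\ref{Lema2.5} would keep the lemma independent of the heavier machinery (Lemma~\ref{Lema2} and the Kuratowski citations) on which that lemma depends; otherwise the two arguments are of essentially the same length and depth.
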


\begin{proof}
Let $C$ and $D$ be proper subcontinua such that $X=C\cup D$. Since $X$ is irreducible between $p$ and $q$, we may suppose that $X\setminus\kappa(q)\subseteq C\setminus D$. Let $\alpha\colon [0,1]\to \mathcal{C}(X)$ be an order arc such that $\alpha(0)=\{w\}$ and $\alpha(1)=X$. Since $\int_X(D)\neq\emptyset$, there exists $t_0<1$ such that $\alpha(t_0)\cap D\neq\emptyset$. Observe that $\alpha(t_0)\cap (X\setminus\kappa(q))\neq\emptyset$ and $q\in D$. Hence, $X=\alpha(t_0)\cup D$. Thus, $X\setminus\kappa(q)\subseteq \alpha(t_0)$ and $\alpha(t_0)\cap A\neq\emptyset$. Therefore, $A$ blocks $\{w\}$.
\end{proof}

The next theorem follows from combining Lemmas \ref{Lema0}, \ref{Lema2.5}, \ref{Lema3} and \ref{Lema4}.

\begin{theorem}\label{Theo3}
	Let $X$ be a decomposable continuum such that $X=\mathrm{irr}\{p,q\}$, for some $p,q\in X$. Let $C=X\setminus\kappa(q)$ and $D=X\setminus \kappa(p)$. Then,
	\begin{enumerate}
		\item $\{C,D\}\subseteq 2^X$ if and only if $|\mathcal{NB}(\mathcal{F}_1(X))|=3$. Furthermore, $\mathcal{NB}(\mathcal{F}_1(X))=\{C,D,C\cup D\}$.
		\item $|\{C,D\}\cap 2^X|=1$ if and only if $|\mathcal{NB}(\mathcal{F}_1(X))|=1$. Furthermore, $\mathcal{NB}(\mathcal{F}_1(X))=\{C,D\}\cap 2^X$.
		\item $\{C,D\}\cap 2^X=\emptyset$ if and only if $\mathcal{NB}(\mathcal{F}_1(X))=\emptyset$.
	\end{enumerate}
\end{theorem}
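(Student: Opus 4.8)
The plan is to assemble Theorem~\ref{Theo3} entirely from the four preceding lemmas, treating $C = X \setminus \kappa(q)$ and $D = X \setminus \kappa(p)$ as the only candidate nonblockers. The backbone of the argument is to pin down \emph{exactly} which sets lie in $\mathcal{NB}(\mathcal{F}_1(X))$. First I would observe that, by Lemma~\ref{Lema2.5}, the irreducibility of $X$ between $p$ and $q$ forces the composant structure to split cleanly: $X \setminus \kappa(p)$ and $X \setminus \kappa(q)$ are disjoint (contained in the disjoint open sets $V$ and $U$ respectively), so $C \cap D = \emptyset$ whenever both are closed. The heart of the matter is the characterization, for an arbitrary $A \in 2^X$, of when $A$ does not block every point of $X \setminus A$. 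I would split the points of $X$ into those in $\kappa(p) \cap \kappa(q)$ and those in $(X \setminus \kappa(p)) \cup (X \setminus \kappa(q))$, since Lemma~\ref{Lema3} handles the first kind and Lemma~\ref{Lema4} handles the second.

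The key logical step is to show that $A \in \mathcal{NB}(\mathcal{F}_1(X))$ forces $A \subseteq C \cup D$ and that $A$ meets neither $X \setminus \kappa(q)$ nor $X \setminus \kappa(p)$ in a way that leaves an unblocked point behind. Concretely, if $A$ does not block each point of $X \setminus A$, then in particular it does not block the points of $\kappa(p) \cap \kappa(q) \setminus A$, so Lemma~\ref{Lema3}(1) yields $A \subseteq \cl_X(C) \cup \cl_X(D)$. Here I expect the main obstacle: upgrading this to $A \subseteq C \cup D$ (rather than the closures) and ruling out the ``partial'' cases. The tool for this is Lemma~\ref{Lema4}, read contrapositively: if $A$ meets $X \setminus \kappa(q) = C$ but does not contain all of $C$, then there is a point $w \in C \setminus A$ that $A$ \emph{blocks}, so $A \notin \mathcal{NB}(\mathcal{F}_1(X))$; symmetrically for $D$. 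Hence any genuine nonblocker $A$ must, for each of $C$ and $D$, either be disjoint from it or contain it entirely. Combined with $A \subseteq \cl_X(C) \cup \cl_X(D)$ and the fact that points of $\kappa(p)\cap\kappa(q)$ cannot belong to $A$, this squeezes $A$ into one of the sets $C$, $D$, or $C \cup D$ (whenever these lie in $2^X$).

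With that established, I would verify the three cases. For the sufficiency direction in each, Lemma~\ref{Lema0} gives $\{X \setminus \kappa(x) : x \in X\} \cap 2^X \subseteq \mathcal{NB}(\mathcal{F}_1(X))$, so whichever of $C$, $D$ is closed is automatically a nonblocker; and when both are closed, $C \cup D = (X \setminus \kappa(q)) \cup (X \setminus \kappa(p))$ is a nonblocker by Lemma~\ref{Lema3}(2) applied to every point of $\kappa(p) \cap \kappa(q)$ together with the observation that $C \cup D$ contains no point of $\kappa(p) \cap \kappa(q)$ (so there are no other points to check). For statement~(1), both $C,D \in 2^X$ gives exactly the three nonblockers $C$, $D$, $C\cup D$, and no others by the squeezing argument; for~(2), exactly one of $C,D$ is closed, the other composant being dense and its complement failing to be closed, so only the single closed set survives and $C \cup D \notin 2^X$; for~(3), if neither $C$ nor $D$ is closed then the squeezing argument leaves no admissible $A$ at all. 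The only care needed is the bookkeeping of which unions land in $2^X$: $C \cup D$ is closed precisely when both $C$ and $D$ are, which is why the count jumps directly from $3$ to $1$ to $0$ with no intermediate value.

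The step I anticipate as the genuine difficulty is showing the containment cannot be ``proper'' — that is, ruling out an $A$ that meets $C$ in some but not all of its points. This is exactly where Lemma~\ref{Lema4} does the decisive work, and I would make sure to apply it in both orientations (to $C$ via the point $q$, and to $D$ via the point $p$) so that no lopsided configuration escapes. Everything else is assembling the equivalences and reading off cardinalities, which is routine once the membership characterization $A \in \mathcal{NB}(\mathcal{F}_1(X)) \iff A \in \{C, D, C \cup D\} \cap 2^X$ is in hand.
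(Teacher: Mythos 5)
Your plan follows exactly the assembly the paper intends (its proof of Theorem~\ref{Theo3} is the single sentence that it ``follows from combining'' Lemmas~\ref{Lema0}, \ref{Lema2.5}, \ref{Lema3} and \ref{Lema4}), and your assembly is in fact complete in statement~(1): when both $C$ and $D$ are closed, Lemma~\ref{Lema3}(1) really does give $A\subseteq C\cup D$, Lemma~\ref{Lema4} gives the ``disjoint or contained'' dichotomy, and Lemmas~\ref{Lema0} and \ref{Lema3}(2) give sufficiency. The genuine gap is in statements~(2) and (3), i.e.\ precisely when one of $C$, $D$ fails to be closed. Your phrase ``the fact that points of $\kappa(p)\cap\kappa(q)$ cannot belong to $A$'' is not a fact you have established anywhere; it is the missing content. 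Lemma~\ref{Lema3}(1) only yields $A\subseteq\cl_X(C)\cup\cl_X(D)$, and the set $(\cl_X(C)\cup\cl_X(D))\setminus(C\cup D)$ lies inside $\kappa(p)\cap\kappa(q)$ and can be nonempty. Lemma~\ref{Lema4}, which you designate as the decisive tool, cannot remove such points: it applies only when $A$ meets $C$ (or $D$) and misses part of it, and it produces blocked points only inside $C$ (or $D$); it is silent about a closed set $A$ that contains $D$ together with points of $\cl_X(D)\setminus D$, and equally silent about a closed $A$ disjoint from both $C$ and $D$ but contained in $(\cl_X(C)\setminus C)\cup(\cl_X(D)\setminus D)$.

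Concretely, suppose $D$ is not closed and take $A=\cl_X(D)$, a legitimate element of $2^X$ (even a subcontinuum, since $D$ is connected by the Kuratowski result quoted in the proof of Lemma~\ref{Lema2}). Then $A\cap C=\emptyset$ by Lemma~\ref{Lema2.5}, $D\subseteq A$ so Lemma~\ref{Lema4} gives nothing, and $A\subseteq\cl_X(C)\cup\cl_X(D)$; so $A$ passes every test in your argument, yet the theorem asserts $A\notin\mathcal{NB}(\mathcal{F}_1(X))$. For an explicit instance: let $K$ be an indecomposable continuum (say the buckethandle) with a point $e$, let $J$ be an arc with $J\cap K=\{e\}$, let $p$ be the other endpoint of $J$, and let $q\in K\setminus\kappa_K(e)$. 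Then $X=K\cup J=\mathrm{irr}\{p,q\}$, $C=\{p\}$ is closed, while $D=K\setminus\kappa_K(e)$ is dense in $K$ and not closed, so $\cl_X(D)=K$. The theorem says $\mathcal{NB}(\mathcal{F}_1(X))=\{\{p\}\}$, so a correct proof must show that $K$, and also every nonempty closed subset of $\kappa_K(e)$, blocks some point (and they do: every subcontinuum through a point $x\in J\setminus\{e\}$ missing $K$ lies in $J\setminus\{e\}$, which is not dense); nothing in your proposal accomplishes this. Filling the hole requires a new argument beyond the four lemmas, for example: if $D$ is not closed then $\cl_X(D)$ has nonempty interior (if $\cl_X(D)$ were nowhere dense, pick $z\in\cl_X(D)\setminus D\subseteq\kappa(p)$ and a proper subcontinuum $P$ containing $p$ and $z$; then $P\cup\cl_X(D)$ is a proper subcontinuum containing $p$ and $D$, forcing $D\subseteq\kappa(p)$, a contradiction), after which one shows that continua missing such an $A$ cannot have dense union; a separate argument is still needed for closed sets inside $(\cl_X(C)\setminus C)\cup(\cl_X(D)\setminus D)$.
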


Let $\Sigma_2$ be the dyadic solenoid. It is not difficult to see that $\mathcal{C}(X)\setminus \{X\} \subseteq \mathcal{NB}(\mathcal{F}_1(X))$. It is well known that $\Sigma_2$ is indecomposable and so, irreducible. Also, if $X$ is hereditarily indecomposable, then $\mathcal{NB}(\mathcal{F}_1(X))=\emptyset$ \cite[Remark 3.8]{Escobedo2017}. We have the following natural question:

\begin{question}
Is it possible to characterize the hyperspace $\mathcal{NB}(F_1(X))$ when $X$ is an indecomposable continuum?
\end{question}

A continuum $X$ is of type $\lambda$ provided that $X$ is irreducible and each indecomposable subcontinuum of $X$ has empty interior. By \cite[Theorem 10, p.15]{Thomas}, a continuum $X$ is of type $\lambda$ if and only if admits a finest monotone upper semicontinuous decomposition $\mathcal{G}$ such that each element of $\mathcal{G}$ is nowhere dense and $X/\mathcal{G}$ is an arc.

\begin{theorem}
	Let $X$ be a continuum. If $X$ is of type $\lambda$, then $|\mathcal{NB}(\mathcal{F}_1(X))|=3$.
\end{theorem}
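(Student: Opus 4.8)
The plan is to reduce everything to Theorem~\ref{Theo3} and then to compute the two relevant composants via the canonical decomposition. Since $X$ is of type $\lambda$ it is irreducible, and it cannot be indecomposable (an indecomposable $X$ would be an indecomposable subcontinuum of itself with nonempty interior, violating the definition); hence $X$ is a decomposable irreducible continuum, say $X=\mathrm{irr}\{p,q\}$, and Theorem~\ref{Theo3} applies with $C=X\setminus\kappa(q)$ and $D=X\setminus\kappa(p)$. By part~(1) of that theorem it suffices to prove that both $C$ and $D$ belong to $2^X$, that is, that they are nonempty and closed. To this end, fix (using the Thomas characterization quoted just above) the finest monotone upper semicontinuous decomposition $\mathcal{G}$ with nowhere dense elements, and let $\pi\colon X\to X/\mathcal{G}$ be the quotient map, identifying the arc $X/\mathcal{G}$ with $[0,1]$. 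I will use the standard fact that for a monotone surjection between continua the preimage of a subcontinuum is a subcontinuum, together with the observation that each fiber $\pi^{-1}(t)$ is exactly one element of $\mathcal{G}$, hence nowhere dense.

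First I would show that $p$ and $q$ sit over the endpoints, that is $\{\pi(p),\pi(q)\}=\{0,1\}$. Indeed $\pi(p)\neq\pi(q)$, for otherwise the fiber through $p$ would be a proper (nowhere dense) subcontinuum containing both $p$ and $q$, contradicting irreducibility; and writing $a=\min\{\pi(p),\pi(q)\}$ and $b=\max\{\pi(p),\pi(q)\}$, the set $\pi^{-1}([a,b])$ is a subcontinuum containing $p$ and $q$, so it must equal $X$, forcing $[a,b]=[0,1]$. Assume henceforth $\pi(p)=0$ and $\pi(q)=1$, and write $T_0=\pi^{-1}(0)$ and $T_1=\pi^{-1}(1)$.

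The core step is to prove $X\setminus\kappa(p)=T_1$ (and, symmetrically, $X\setminus\kappa(q)=T_0$). For the inclusion $X\setminus\kappa(p)\subseteq T_1$, take any $x$ with $t=\pi(x)<1$; then $\pi^{-1}([0,t])$ is a subcontinuum containing $p$ and $x$ but not $q$, hence proper, so $x\in\kappa(p)$. The reverse inclusion $T_1\subseteq X\setminus\kappa(p)$ is where I expect the real work, and it is here that nowhere density of the layers is essential. Suppose toward a contradiction that some $x\in T_1$ lies in $\kappa(p)$, witnessed by a proper subcontinuum $M$ with $p,x\in M$. Since $x\in M\cap T_1$, the union $M\cup T_1$ is a subcontinuum, and it contains $p\in M$ and $q\in T_1$; by irreducibility $M\cup T_1=X$. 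Thus $X\setminus M\subseteq T_1$, which is nowhere dense, so the open set $X\setminus M$ has empty interior and is therefore empty, giving $M=X$, a contradiction. Hence $T_1\cap\kappa(p)=\emptyset$ and $X\setminus\kappa(p)=T_1$.

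Finally, $T_1=\pi^{-1}(1)$ is closed in the compact space $X$ and contains $q$, so $D=X\setminus\kappa(p)=T_1\in 2^X$; symmetrically $C=X\setminus\kappa(q)=T_0\in 2^X$, since it contains $p$. With $\{C,D\}\subseteq 2^X$, part~(1) of Theorem~\ref{Theo3} yields $|\mathcal{NB}(\mathcal{F}_1(X))|=3$, in fact $\mathcal{NB}(\mathcal{F}_1(X))=\{T_0,T_1,T_0\cup T_1\}$. The single delicate point is the reverse inclusion in the core step, where irreducibility is combined with the nowhere density of the endpoint layer; everything else is bookkeeping with the monotone decomposition and an appeal to the already proved Theorem~\ref{Theo3}.
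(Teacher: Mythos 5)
Your proof is correct and takes essentially the same route as the paper: both arguments reduce the statement to Theorem~\ref{Theo3} via Thomas's finest monotone upper semicontinuous decomposition and the identification of the endpoint layers with $X\setminus\kappa(q)$ and $X\setminus\kappa(p)$. The only difference is that the paper obtains that identification by citing \cite[Theorem 8, p.14]{Thomas}, whereas you prove it directly (correctly) from irreducibility together with the nowhere density of the layers.
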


\begin{proof}
	Let $f\colon X\to [0,1]$ be the monotone map such that $\int_X(f^{-1}(t))=\emptyset$, for each $t\in [0,1]$, and  $\{f^{-1}(t) : t\in [0,1]\}$ is the finest monotone upper semicontinuous decomposition of $X$. Observe that $f^{-1}(0)=X\setminus \kappa(q)$ and $f^{-1}(1)=X\setminus\kappa(p)$, by \cite[Theorem 8, p.14]{Thomas}. Therefore, $|\mathcal{NB}(\mathcal{F}_1(X))|=3$, by Theorem \ref{Theo3}.
\end{proof}

\section{A characterization of $S^1$}

\begin{lemma}\label{Lema5}
Let $X$ be a continuum such that $\mathcal{F}_1(X)\subseteq \mathcal{NB}(\mathcal{F}_1(X)$. Suppose that there exists $p\in X$ such that $\{p\}\notin \mathcal{NWC}(X)$. For each $x\in X\setminus\{p\}$, let $$s(x)=\bigcup\{N\in\mathcal{C}(X) : x\in N\subseteq X\setminus \{p\}\}.$$
Then:
	\begin{enumerate}
		\item $s(x)$ is dense, for each $x\in X\setminus\{p\}$.
		\item There are $x,y\in X$ such that $s(x)\cap s(y)=\emptyset$.
		\item $\int_X(s(x))=\emptyset$, for each $x\in X\setminus\{p\}$.
		\item For each $x\in X\setminus \{p\}$, $s(x)=\bigcup\{L_n : n\in\N\}$, where $L_n$ is a subcontinuum of $X\setminus \{p\}$ containing $x$, for each $n\in\N$.
		\item $X\setminus\{p\}=\bigcup_{i\in I}s_i$, where $s_i\cap s_j=\emptyset$, for each $i\neq j$; $s_i=s(x)$ for some $x\in X\setminus\{p\}$, for each $i\in I$; and $I$ is uncountable.
		\item If $K\in\mathcal{C}(X)$ is such that $\int_X(K)\neq\emptyset$, and $A$ is a countable compact subset of $X\setminus K$, then $A$ does not block $\{x\}$, for any $x\in K$.
	\end{enumerate}
\end{lemma}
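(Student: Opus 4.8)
The plan is to fix $x\in K$ and prove that the set $S=\bigcup\{C\in\mathcal{C}(X):x\in C\subseteq X\setminus A\}$ is dense in $X$; by the equivalence of (a) and (d) in Proposition~\ref{pro:nonb} this is exactly the assertion that $A$ does not block $\{x\}$ (note $x\notin A$, since $x\in K$ and $A\cap K=\emptyset$, so the statement is meaningful).

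First I would exploit the partition of $X\setminus\{p\}$ into the classes $s(\cdot)$ furnished by (5). Since $A$ is countable, every point of $A$ distinct from $p$ lies in exactly one such class, so $A$ meets at most countably many of them; as (5) guarantees uncountably many classes, there is a class $\sigma=s(z_0)$ with $\sigma\cap A=\emptyset$. By (1), $\sigma$ is dense in $X$, hence $\sigma$ meets the nonempty open set $\int_X(K)$; fix a point $y\in\sigma\cap\int_X(K)$.

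Next I would show $\sigma\subseteq S$, which finishes the argument because $\sigma$ is dense. Let $z\in\sigma$. By (5) the classes partition $X\setminus\{p\}$, so $s(z)=\sigma$; in particular $y\in s(z)$, and the definition of $s(z)$ yields a subcontinuum $M$ with $z\in M\subseteq X\setminus\{p\}$ and $y\in M$. As $M$ is a subcontinuum of $X\setminus\{p\}$ through $z$, we have $M\subseteq s(z)=\sigma$, whence $M\cap A=\emptyset$. Because $y\in M\cap K$, the union $M\cup K$ is a subcontinuum of $X$; it contains $x$ (as $x\in K$) and $z$, and $(M\cup K)\cap A=(M\cap A)\cup(K\cap A)=\emptyset$. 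Thus $z\in S$. Consequently $S\supseteq\sigma$ is dense, so $A$ does not block $\{x\}$; since $x\in K$ was arbitrary, the claim follows.

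The substantive point, and the step I expect to carry the weight, is the cardinality argument: pitting the countability of $A$ against the uncountable family of classes from (5) is precisely what produces an entire dense class avoiding $A$, while the hypothesis $\int_X(K)\neq\emptyset$ is what lets that class be spliced onto $K$. This is also where the weak cut point $p$ is essential: without it $X\setminus\{p\}$ need not break into many classes (for a simple closed curve it does not), and the statement indeed fails in that case.
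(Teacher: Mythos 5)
Your argument for item \textit{(6)}, granting items \textit{(1)}--\textit{(5)}, is correct and is essentially the paper's own: use the countability of $A$ against the uncountably many partition classes to find one class $\sigma$ disjoint from $A$, note $\sigma$ is dense, and splice it onto $K$. The only cosmetic difference is that the paper invokes \textit{(4)} to write the chosen class as a union of continua $L_n$ and takes the dense family $\{K\cup L_n : n\in\N\}$, while you attach a single continuum $M\cup K$ to each point $z$ of the class; both work, and both in fact yield the conclusion for every $x\in K$ at once, since every glued continuum contains all of $K$.

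The genuine gap is that \textit{(6)} is only the last of six assertions in the statement. Your proof explicitly takes \textit{(1)} and \textit{(5)} as given (``furnished by (5)'', ``By (1)''), yet these, together with \textit{(2)}--\textit{(4)}, are part of what must be proved, and they carry the real content. In the paper: \textit{(1)} follows because $\{p\}\in\mathcal{NB}(\mathcal{F}_1(X))$, so by Proposition~\ref{pro:nonb}(d) the set $s(x)$ is dense for each $x\neq p$; \textit{(2)} comes from the hypothesis $\{p\}\notin\mathcal{NWC}(X)$, i.e., $p$ is a weak cut point, which gives two points joined by no subcontinuum of $X\setminus\{p\}$, hence two disjoint classes; \textit{(3)} follows since the classes form a partition into dense sets and the partition is nontrivial by \textit{(2)}; \textit{(4)} exhibits each $s(x)$ as $\bigcup_n L_n$, where $L_n$ is the component containing $x$ of $X\setminus U_n$ for a local base $\{U_n\}$ at $p$, so each class is an $F_\sigma$-set; and \textit{(5)} --- the uncountability you yourself call ``the substantive point'' --- is then a Baire category argument: if there were only countably many classes, $X\setminus\{p\}$ would be a countable union of closed nowhere dense sets (the $L_n$, nowhere dense by \textit{(3)}), contradicting the fact that the locally compact space $X\setminus\{p\}$ is Baire. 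Without this chain, your pivotal step ``there is a class $\sigma$ with $\sigma\cap A=\emptyset$'' is unsupported, so as written you have proved only the implication ``\textit{(1)} and \textit{(5)} imply \textit{(6)}'', not the lemma.
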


\begin{proof}
\textit{(1)} follows from the fact that $\{p\}$ does not block $\{x\}$ for each $x \in X \setminus \{p\}$ and Proposition~\ref{pro:nonb}. Since $\{p\}\notin \mathcal{NWC}(X)$, it is clear \textit{(2)}.

It is not difficult to see that if $s(x)\cap s(y)\neq\emptyset$, for some $x,y\in X\setminus\{p\}$, then $s(x)=s(y)$; i.e., the family $\mathcal{P}=\{s(x) : x\in X\setminus\{p\}\}$ is a partition of $X\setminus\{p\}$. By \textit{(2)}, $\mathcal{P}$ is not trivial. Thus, since $s(x)$ is dense for all $x$ (see \textit{(1)}), we have \textit{(3)}.

Let $\{U_n : n\in\N\}$ be a local base of $p$ in $X$. For each $x\in X\setminus\{p\}$, let $L_n$ be the component of $X\setminus U_n$ containing $x$. It is not difficult to see that $s(x)=\bigcup\{L_n : n\in\N\}$. Thus, we prove \text{(4)}. Finally, notice that $s(x)$ is a $F_{\sigma}$-set, for each $x\in X\setminus\{p\}$. Since $X\setminus\{p\}$ is a Baire space (see \cite[Theorem 25.3]{Willard}), we have \textit{(5)}.

Since $\int_X(K)\neq\emptyset$ and $A$ is a countable set, we have that there exists $x\in K$ such that $s(x)\cap A=\emptyset$, by \textit{(1)} and \textit{(5)}. By  \textit{(4)}, there is $(L_n)_{n\in\N}$ a sequence of subcontinua of $X\setminus\{p\}$ such that $x\in L_n$, for each $n\in\N$, such that $s(x)=\bigcup_{n\in\N}L_n$. Since $s(x)$ is dense, it is clear the $\bigcup\{K\cup L_n : n\in\N\}$ is dense in $X$. Furthermore, $A\cap (K\cup L_n)=\emptyset$, for each $n\in\N$. Therefore, $A$ does not block $\{x\}$, by Proposition~\ref{pro:nonb}.
\end{proof}

\begin{proposition}\label{prop3}
	Let $X$ be a continuum such that $\mathcal{NB}(\mathcal{F}_1(X))=\mathcal{F}_1(X)$. Then:
	\begin{enumerate}
		\item $X\setminus K$ is connected, for each $K\in\mathcal{C}(X)$.
		\item $X$ is a decomposable continuum.
		\item $\int_X(K)=\emptyset$ for each indecomposable subcontinuum $K$ of $X$.
	\end{enumerate}
\end{proposition}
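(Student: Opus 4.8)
The plan is to argue each statement by contradiction, exploiting that the hypothesis packages two inclusions: $\mathcal{F}_1(X)\subseteq\mathcal{NB}(\mathcal{F}_1(X))$ (every singleton blocks no point) and $\mathcal{NB}(\mathcal{F}_1(X))\subseteq\mathcal{F}_1(X)$ (no closed set with two or more points is a nonblocker). So a contradiction is reached either by producing a singleton that \emph{does} block some point, or a nondegenerate closed set that blocks nobody; in both directions I would phrase ``does not block'' through the nested-continua form of Proposition~\ref{pro:nonb}. First I would dispose of a trivial case: by \eqref{equ00} we have $\mathcal{NWC}(X)\subseteq\mathcal{NB}(\mathcal{F}_1(X))=\mathcal{F}_1(X)$, so if in addition every singleton were a non-weak cut set, then $\mathcal{NWC}(X)=\mathcal{F}_1(X)$ and $X$ would be a simple closed curve by Theorem~\ref{Theorem 5.7}, for which (1)--(3) are clear. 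Hence I may assume there is $p$ with $\{p\}\notin\mathcal{NWC}(X)$, which is exactly the hypothesis of Lemma~\ref{Lema5}; all of its conclusions, in particular the interior-detecting statement $(6)$, are then at my disposal.

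Part $(2)$ is the cleanest and I would prove it first, independently. Suppose $X$ is indecomposable and pick $z_1,z_2$ in two distinct composants; recall that then every composant is dense, distinct composants are disjoint, and every proper subcontinuum lies in a single composant. I claim $\{z_1,z_2\}$ blocks no point, contradicting $\mathcal{NB}(\mathcal{F}_1(X))\subseteq\mathcal{F}_1(X)$. Fix $x\notin\{z_1,z_2\}$; at most one of $z_1,z_2$ lies in $\kappa(x)$. If neither does, every proper subcontinuum through $x$ avoids both and their union $\kappa(x)$ is dense. If, say, $z_1\in\kappa(x)$ and $z_2\notin\kappa(x)$, then by hypothesis $\{z_1\}$ does not block $x$, so $\bigcup\{C\in\mathcal{C}(X):x\in C\subseteq X\setminus\{z_1\}\}$ is dense; each such $C$ is proper, hence lies in $\kappa(x)$ and automatically misses $z_2$. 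Either way $\{z_1,z_2\}$ does not block $x$, so $X$ is decomposable.

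For $(1)$ and $(3)$ I would manufacture a nondegenerate nonblocker from the offending configuration, using Lemma~\ref{Lema5}$(6)$ to neutralize the ``outside'' of a set with interior and boundary bumping to build connecting continua. For $(1)$, suppose $X\setminus K=U\cup V$ is a separation; boundary bumping makes $M:=X\setminus U$ a proper subcontinuum with $V\subseteq\int_X(M)$. Choosing $z_1,z_2\in U$, the set $\{z_1,z_2\}\subseteq X\setminus M$ does not block any $x\in M=K\cup\cl_X(V)$ by Lemma~\ref{Lema5}$(6)$; the only points left to handle are $x\in U\setminus\{z_1,z_2\}$. For $(3)$, suppose $K$ is indecomposable with $\int_X(K)\neq\emptyset$. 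I would first record a forced fact: if some composant $\rho$ of $K$ missed $F:=\Bd_X(K)$, then for $x\in\rho$ and any $z\in K\setminus\{x\}$, every subcontinuum $C\ni x$ with $z\notin C$ is trapped in $\rho$ (if $C\subseteq K$ it is proper, hence in $\rho$; if $C\not\subseteq K$ its $x$-component in $C\cap K$ would meet $F$ inside $\rho$, impossible), so $\bigcup C\subseteq K$ is not dense and $\{z\}$ blocks $x$, contradicting the hypothesis. Thus \emph{every} composant of $K$ meets $F$. Using $(1)$, $B:=\cl_X(X\setminus K)$ is a subcontinuum with $X=K\cup B$ and $B\cap\int_X(K)=\emptyset$; taking $z_1,z_2\in\int_X(K)$ in distinct composants of $K$, Lemma~\ref{Lema5}$(6)$ applied to $B$ kills all $x\in B$, and for $x\in\int_X(K)$ whose $K$-composant avoids both $z_1,z_2$ I would run a nested family of proper subcontinua of $K$ through $x$ reaching a frontier point (possible by the forced fact), union each with $B$, and get continua through $x$ missing $\{z_1,z_2\}$ with dense union $B\cup\kappa(x)$.

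The step I expect to be the main obstacle is common to both parts: a point $x$ sharing its ``region'' with the removed points — namely $x\in U$ with $z_1,z_2\in U$ in $(1)$, or $x\in\int_X(K)$ in the same $K$-composant as $z_1$ in $(3)$. Here a continuum through $x$ avoiding one removed point need not avoid the other, so neither the composant argument of $(2)$ nor Lemma~\ref{Lema5}$(6)$ applies directly. The hypothesis still supplies a dense family of continua through $x$ missing (say) $z_1$, and the plan is to promote it to a dense family missing both: one uses that $z_2$ lies in a \emph{different} composant of $K$ (forcing the $x$-component of $C\cap K$ to be proper and to avoid $z_2$) together with the indecomposability of $K$ to reroute any offending continuum through $B$ and back into $K$ along a safe composant, and, in the ``essential cut'' subcase of $(1)$, to read off a blocking singleton instead. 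Making this rerouting rigorous — that is, controlling reachability within a single composant while avoiding a prescribed point, so that the union of the surviving continua stays dense — is the technical heart of the argument.
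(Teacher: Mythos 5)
Your overall architecture coincides with the paper's: dispose of the case $\mathcal{F}_1(X)\subseteq\mathcal{NWC}(X)$ via Bing's theorem, then fix $p$ with $\{p\}\notin\mathcal{NWC}(X)$ and contradict $\mathcal{NB}(\mathcal{F}_1(X))\subseteq\mathcal{F}_1(X)$ by exhibiting a two-point nonblocker, with Lemma~\ref{Lema5} as the main tool. Your part (2) is exactly the paper's argument. Your part (3) is also essentially the paper's (every composant of $K$ is forced to meet $\cl_X(X\setminus K)$; then two points of $\int_X(K)$ in distinct composants of $K$ give a nonblocker, with Lemma~\ref{Lema5}(6) doing most of the work), and your ``same-composant'' case there can indeed be closed: take the order arc from Proposition~\ref{pro:nonb}(b) avoiding $z_1$ and stop it at the first parameter where it meets $\cl_X(X\setminus K)$; the resulting continuum lies in $K$ (before first contact it lies in $\int_X(K)$), is proper in $K$ since it misses $z_1$, hence lies in $\kappa_K(x)$ and misses $z_2$, and its union with $\cl_X(X\setminus K)$ is a continuum with interior, so Lemma~\ref{Lema5}(6) finishes. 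This first-contact move is precisely what the paper does, and it is cleaner than your per-continuum ``rerouting,'' which discards pieces of each $C$ and therefore does not obviously preserve density.

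The genuine gap is in part (1), and it is created by your choice of configuration: you place both points $z_1,z_2$ in the \emph{same} component $U$ of $X\setminus K$, leaving the case $x\in U$ (say $x\in s(z_1)$) unresolved. In that case the hypothesis gives continua through $x$ missing $z_1$ with dense union, but you have no tool to make them miss $z_2$ as well: the first-contact trick, applied to an order arc avoiding $z_1$ stopped when it first meets $K\cup V$, only produces a continuum inside $\cl_X(U)\subseteq U\cup K$ --- which is exactly where $z_2$ lives, so nothing is gained. Your proposed repair does not apply here: in (1) the subcontinuum $K$ is arbitrary, not indecomposable, so ``$z_2$ lies in a different composant of $K$'' and ``the indecomposability of $K$'' are meaningless, and ``read off a blocking singleton'' is not an argument. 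The paper removes the difficulty at the level of the initial choice: it takes $x\in U$ and $y\in V$, in \emph{different} components of $X\setminus K$ and in different $s$-classes. Then for $w\in s(x)\cap U$, the order arc avoiding $x$, stopped at first contact with the continuum $K\cup V$, is trapped in $\cl_X(U)\subseteq U\cup K$ and hence \emph{automatically} misses $y\in V$; gluing this continuum with $K$ and with the increasing continua $L_n$ of a third class $s(z)$, $z\in K$, yields a dense union of continua through $w$ missing $\{x,y\}$. So statement (1) is provable, but only after replacing your same-component configuration by this cross-component one; as written, your remaining case cannot be closed.
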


\begin{proof}
         Observe that if $\mathcal{F}_1(X)\subseteq \mathcal{NWC}(X)$, then $\mathcal{NWC}(X)=\mathcal{NB}(\mathcal{F}_1(X))$, by (\ref{equ00}). Hence, $X$ is a simple closed curve, by \cite[Theorem 11, p.505]{Bing}, and clearly $S^1$ satisfies \textit{(1), (2)} and \textit{(3)}. Therefore, suppose that there exists $p\in X$ such that $\{p\}\notin\mathcal{NWC}(X)$.         
         
         We prove \textit{(1)}. Let $K\in\mathcal{C}(X)$. Suppose that $X\setminus K$ is disconnected; i.e., there exist open non-empty subsets $U$ and $V$ of $X$ such that $X\setminus K=U\cup V$ and $U\cap V=\emptyset$. It is well known that $K\cup U$ and $K\cup V$ are subcontinua of $X$ \cite[Proposition 6.3]{Nadler}.
         
         By Lemma \ref{Lema5}, there exist $x\in U$ and $y\in V$ such that $s(x)\cap s(y)=\emptyset$. We show that $\{x,y\}\in\mathcal{NB}(\mathcal{F}_1(X))$. Let $w\in X\setminus\{x,y\}$. By Lemma \ref{Lema5} \textit{(5)}, there exists $z\in X\setminus\{p\}$ such that $s(z)\cap (s(x)\cup s(y))=\emptyset$. We denote $s(z)=\bigcup\{L_n : n\in\N\}$, where $L_n$ is a subcontinuum of $X$ such that $z\in L_n\subseteq X\setminus\{p\}$, for each $n\in\N$. Since $s_i$ is dense and connected, it is clear that $s_i\cap K\neq\emptyset$, for each $i\in I$. Thus, without loss of generality, we may suppose that $z\in K$.
         
         We consider two cases: \medskip
         
\noindent \textbf{Case 1.} $w\in s_i\cup K$, for some $i\in I$ where $s_i\cap (s(x)\cup s(y))=\emptyset$. \medskip

Suppose that $w\in s_i$. Since $s_i\cap K\neq\emptyset$, there exists a subcontinuum $T$ of $X$ such that $w\in T\subseteq s_i$ and $T\cap K\neq\emptyset$. Observe that $T\cup K\cup L_n$ is a continuum which does not intersect $\{x,y\}$, for any $n\in\N$. Since $s(z)\subseteq \bigcup\{T\cup K\cup L_n : n\in\N\}$ and $s(z) $ is dense, then $\cl_X(\bigcup\{T\cup K\cup L_n : n\in\N\})=X$. Therefore, $\{x,y\}$ does not block $\{w\}$, by Proposition~\ref{pro:nonb}. If $w\in K$, then $\bigcup\{K\cup L_n : n\in\N\}$ is dense, and $\{x,y\}$ does not block $\{w\}$, by Proposition~\ref{pro:nonb}. \medskip
                               
\noindent \textbf{Case 2.} $w\in (s(x)\cup s(y))\setminus K$. \medskip

Suppose that $w\in s(x)\cap U$. Since $\{x\}\in\mathcal{NB}(\mathcal{F}_1(X))$, there is an order arc $\alpha\colon [0,1]\to \mathcal{C}(X)$ such that $\alpha(0)=\{w\}, \alpha(1)=X$ and $x\notin\alpha(t)$ for each $t<1$. Let $t_0=\min\{t\in [0,1] : \alpha(t)\cap (K\cup V)\neq\emptyset\}$. Since $K\cup V$ is a continuum with non-empty interior, $t_0<1$ and $x\notin \alpha(t_0)$. Furthermore, it is not difficult to see that $\alpha(t_0)\cap V=\emptyset$. Hence, $y\notin \alpha(t_0)$. Thus, $\alpha(t_0)\cup K\cup L_n$ is a subcontinuum of $X$ such that it does not intersect $\{x,y\}$, for each $n\in\N$. Since $s(z)\subseteq \bigcup\{\alpha(t_0)\cup K\cup L_n : n\in\N\}$, we have that $\cl_X(\bigcup\{\alpha(t_0)\cup K\cup L_n : n\in\N\})=X$. Therefore, $\{x,y\}$ does not block $\{w\}$, by Proposition~\ref{pro:nonb}. \medskip

We have that $\{x,y\}\in\mathcal{NB}(\mathcal{F}_1(X))$. A contradiction. This completes the proof of \textit{(1)}. \medskip

We prove \textit{(2)}. Suppose that $X$ is indecomposable. Let $p,q\in X$ such that $\kappa(p)\neq\kappa(q)$. Let $x\in X\setminus\{p,q\}$. Note that if $x\notin \kappa(p)\cup\kappa(q)$, then $\kappa(x)$ is dense and $\kappa(x)\cap\{p,q\}=\emptyset$. Hence, $\{p,q\}$ does not block $\{x\}$. Thus, $x\in \kappa(p)\cup\kappa(q)$. Suppose that $x\in\kappa(p)$. Since $\{p\}\in\mathcal{NB}(\mathcal{F}_1(X))$, there is an order arc $\alpha\colon [0,1]\to \mathcal{C}(X)$ such that $\alpha(0)=\{x\}, \alpha(1)=X$ and $p\notin \alpha(t)$, for each $t<1$. Observe that $\alpha(t)\subseteq \kappa(p)$, for each $t<1$. Thus, $\alpha(t)\cap\{p,q\}=\emptyset$ for each $t<1$, and $\{p,q\}$ does not block $\{x\}$. Thus, $\{p,q\}\in\mathcal{NB}(\mathcal{F}_1(X))$. A contradiction. Therefore, $X$ is a decomposable continuum. \medskip

We prove \textit{(3)}. Suppose the contrary that there exists a subcontinuum $K$ of $X$ such that $K$ is indecomposable and $\int_X(K)\neq\emptyset$. By \textit{(2)}, $X\setminus K\neq\emptyset$. Furthermore, $W=\cl_X(X\setminus K)$ is a continuum, by \textit{(1)}. For each $x\in K$, we denote the composant of $x$ in $K$ by $\kappa_K(x)$. Observe that if $\kappa_K(z)\cap W=\emptyset$, for some $z\in K$, and $x\in K\setminus \kappa_K(z)$, then $\{x\}$ blocks $\{z\}$. This contradicts that $\mathcal{NB}(\mathcal{F}_1(X))=\mathcal{F}_1(X)$. Thus, $\kappa_K(x)\cap W\neq\emptyset$, for each $x\in K$.

Let $x$ and $y$ be points of $\int_X(K)$ such that $\kappa_K(x)\cap\kappa_K(y)=\emptyset$. We see that $\{x,y\}\in\mathcal{NB}(\mathcal{F}_1(X))$. Let $z\in X\setminus \{x,y\}$. Note that if $z\notin \kappa_K(x)\cup\kappa_K(y)$, then there is a subcontinuum $H$ of $X$ such that $\{z\}\cup W\subseteq H$ and $H\cap\{x,y\}=\emptyset$. Since $\int_X(H)\neq\emptyset$, $\{x,y\}$ does not block $\{z\}$, by Lemma \ref{Lema5} \textit{(6)}. Thus, $z\in \kappa_K(x)\cup\kappa_K(y)$. 
Suppose that $z\in \kappa_K(x)$. Since $\mathcal{NB}(\mathcal{F}_1(X))=\mathcal{F}_1(X)$, there exists an order arc $\alpha\colon [0,1]\to \mathcal{C}(X)$, such that $\alpha(0)=\{z\}, \alpha(1)=X$ and $x\notin \alpha(t)$ for each $t<1$. Let $$t_0=\min\{t\in [0,1] : \alpha(t)\cap W\neq\emptyset\}.$$
Observe that $t_0<1$ and $\alpha(t_0)\subseteq K$. Hence, $\alpha(t_0)\subseteq \kappa_K(x)$ and $y\notin \alpha(t_0)$. Thus, $\alpha(t_0)\cup W$ is a subcontinuum of $X$ such that $\int_X(\alpha(t_0)\cup W)\neq\emptyset$ and $\{x,y\}\cap (\alpha(t_0)\cup W)=\emptyset$. Therefore, $\{x,y\}$ does not block $\{z\}$, by Lemma \ref{Lema5} \textit{(6)}. This contradicts that $\mathcal{NB}(\mathcal{F}_1(X))=\mathcal{F}_1(X)$ and $\int_X(K)=\emptyset$ for each indecomposable subcontinuum $K$ of $X$.
\end{proof}

\begin{theorem}\label{TeoS1}
Let $X$ be a continuum. Then $X$ is a simple closed curve if and only if $\mathcal{NB}(\mathcal{F}_1(X))=\mathcal{F}_1(X)$.
\end{theorem}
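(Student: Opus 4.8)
The plan is to prove the two implications separately, leveraging the machinery already assembled in the excerpt. The forward direction, that a simple closed curve $X$ satisfies $\mathcal{NB}(\mathcal{F}_1(X))=\mathcal{F}_1(X)$, is the easy half and is essentially already recorded as \cite[Theorem~3.2]{Escobedo2012}; I would either cite it directly or give the one-line geometric argument that for any arc $B\subsetneq S^1$ and any $x\notin B$, the complementary arc containing $x$ can be grown monotonically to a dense (indeed cofinite) subcontinuum avoiding $B$, so $B$ does not block $x$, while for any two-point or larger non-arc set the non-blocking fails at some point. The real content is the converse.

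For the converse, assume $\mathcal{NB}(\mathcal{F}_1(X))=\mathcal{F}_1(X)$. The strategy is to show $X$ is a type $\lambda$ continuum (irreducible with every indecomposable subcontinuum nowhere dense) and then force it to be an arc-like structure that closes up into $S^1$. First I would invoke Proposition~\ref{prop3}, which already does the heavy lifting: it gives that $X\setminus K$ is connected for every $K\in\mathcal{C}(X)$ (part 1), that $X$ is decomposable (part 2), and that every indecomposable subcontinuum has empty interior (part 3). The next step is to establish that $X$ is irreducible. Here I would argue by contradiction: if $X$ had only one composant it would be indecomposable, contradicting decomposability from Proposition~\ref{prop3}(2); more carefully, I would show that a continuum in which every point-set $\{x\}$ is a nonblocker, combined with connectedness of all complements $X\setminus K$, cannot have three or more independent ``directions,'' pinning down irreducibility via the observation that $X$ is irreducible about $\mathcal{NB}^{\ast}(\mathcal{F}_1(X))$ (Proposition~\ref{prop2}) together with $\mathcal{NB}(\mathcal{F}_1(X))=\mathcal{F}_1(X)$.

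Once $X=\mathrm{irr}\{p,q\}$ is decomposable, Theorem~\ref{Theo3} becomes available and is the crux: it says that $\mathcal{NB}(\mathcal{F}_1(X))$ has cardinality $3$, $1$, or $0$ according to how many of $C=X\setminus\kappa(q)$ and $D=X\setminus\kappa(p)$ are closed. But our hypothesis forces $\mathcal{NB}(\mathcal{F}_1(X))=\mathcal{F}_1(X)$, which is infinite (uncountable, in fact) whenever $X$ is nondegenerate. The cases of Theorem~\ref{Theo3} all yield finite hyperspaces, so the \emph{only} way to reconcile infinitude of $\mathcal{F}_1(X)$ with Theorem~\ref{Theo3} is that $X$ fails the hypothesis of that theorem in the right way—namely every point must lie in $\kappa(p)\cap\kappa(q)$ and the classification collapses, which happens precisely when $C$ and $D$ are singletons and $X$ is covered by order arcs from both ends meeting everywhere. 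I expect this to force $f\colon X\to[0,1]$ to be a homeomorphism onto an arc except that the two endpoints $p,q$ are not separating, and the only such continuum is the simple closed curve.

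The main obstacle I anticipate is bridging from the type $\lambda$ / irreducibility structure to the specific conclusion $S^1$ rather than merely an arc. A type $\lambda$ continuum whose decomposition quotient is an arc and all of whose point-sets are nonblockers could a priori be an arc, but an arc fails the hypothesis because its endpoint singletons \emph{do} block (the complement of a neighborhood of an endpoint is not dense after removing an interior point, and more directly an arc has $\{x\}\notin\mathcal{NB}(\mathcal{F}_1(X))$ for interior $x$ since removing $x$ disconnects). So the finesse is to rule out the arc and every other type $\lambda$ continuum, showing the decomposition must be \emph{circular} rather than linear—i.e. the ``two ends'' $p$ and $q$ coincide in a cyclic sense. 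Concretely, I would argue that $\{p\}$ being a nonblocker forces $X\setminus\{p\}$ to remain ``connected enough'' that $p$ is a non-cut point, and likewise for every point; a continuum in which every point is a non-cut point and which carries a monotone map to $[0,1]$ with nowhere-dense fibers must have its two end-fibers degenerate to points that are themselves non-cut, and the classical characterization (Bing, \cite[Theorem~11, p.~505]{Bing}, already cited in Proposition~\ref{prop3}) of $S^1$ as the continuum with exactly this non-cut behaviour closes the argument.
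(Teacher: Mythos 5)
Your forward direction is fine, but your strategy for the converse contains a fatal structural error: you set out to prove that $X$ is irreducible (indeed of type $\lambda$), yet a simple closed curve is neither irreducible nor of type $\lambda$ --- $S^1$ has exactly one composant, since any two of its points lie in a proper arc. In fact, under the hypothesis $\mathcal{NB}(\mathcal{F}_1(X))=\mathcal{F}_1(X)$, irreducibility is \emph{impossible}: the three cases of Theorem~\ref{Theo3} are exhaustive (they are indexed by how many of $C=X\setminus\kappa(q)$ and $D=X\setminus\kappa(p)$ are closed), and in every case $|\mathcal{NB}(\mathcal{F}_1(X))|\le 3$, so a decomposable irreducible continuum can never satisfy the hypothesis. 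There is no ``collapse'' case in which the classification fails and the continuum closes up into a circle; the only legitimate use of Theorem~\ref{Theo3} here is to rule irreducibility \emph{out}. Two subsidiary claims are also false: ``if $X$ had only one composant it would be indecomposable'' is backwards (indecomposable continua have uncountably many pairwise disjoint composants, while $S^1$ has one), and Proposition~\ref{prop2} (irreducibility \emph{about} the family $\mathcal{NB}^{\ast}(\mathcal{F}_1(X))$) cannot yield irreducibility between two points --- when $\mathcal{NB}^{\ast}(\mathcal{F}_1(X))$ contains all singletons it is vacuous. Finally, your closing appeal to Bing conflates non-cut points with non-weak cut points: every indecomposable continuum (e.g.\ a solenoid) has no cut points at all, so ``every point is a non-cut point'' characterizes nothing; Bing's theorem concerns weak cut points.

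What is missing is precisely the hard case, which your proposal never addresses: a decomposable, non-irreducible continuum, all of whose singletons are nonblockers, that possesses a weak cut point. The paper's proof lives entirely in this case. By \cite[Theorem~11, p.~505]{Bing} it suffices to prove $\mathcal{F}_1(X)\subseteq\mathcal{NWC}(X)$; assuming some $\{p\}\notin\mathcal{NWC}(X)$, the paper uses Proposition~\ref{prop3} to write $X=A_1\cup A_2\cup A_3$ with the three proper subcontinua mutually non-redundant, develops the partition of $X\setminus\{p\}$ into the dense, pairwise disjoint $F_\sigma$ sets $s(x)$ (Lemma~\ref{Lema5}), proves Claim~\ref{claimN} about the sets $\Lambda(q,B)$, and then, through a delicate case analysis, manufactures a two-point set lying in $\mathcal{NB}(\mathcal{F}_1(X))$, contradicting $\mathcal{NB}(\mathcal{F}_1(X))=\mathcal{F}_1(X)$. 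None of this machinery is replaceable by Theorem~\ref{Theo3}, and your proposal offers no substitute for it.
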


\begin{proof}
It is clear that if $X$ is a simple closed curve, then $\mathcal{NB}(\mathcal{F}_1(X))=\mathcal{F}_1(X)$. 
Suppose that $\mathcal{NB}(\mathcal{F}_1(X))=\mathcal{F}_1(X)$ and we prove that $X$ is a simple closed curve.
By \cite[Theorem~11, p.~505]{Bing}, we need to show that $\mathcal{F}_1(X)\subseteq \mathcal{NWC}(X)$. Suppose the contrary that there exists $p\in X$ such that $\{p\}\notin \mathcal{NWC}(X)$.	
By Proposition \ref{prop3}, $X=A_1\cup A_2\cup A_3$, where $A_i\in\mathcal{C}(X)$ for $i\in\{1,2,3\}$, and $X\setminus (A_i\cup A_j)\neq\emptyset$, for each $i,j\in\{1,2,3\},\ i\neq j$. Note that $p\in A_1\cap A_2\cap A_3$, by Lemma \ref{Lema5} \textit{(3)}. We will adopt the notation of Lemma~\ref{Lema5}.

Given $q \in X$ and $B \in 2^X$, define $$\Lambda(q,B) = \bigcup\{ K \in \mathcal{C}(X) : q \in K \subseteq X \setminus (B \cup\{p\})\}.$$ We shall prove the following claim.

\begin{claim}\label{claimN}
If $\{k,m,n\} = \{1,2,3\}$, $x \in X \setminus (A_m \cup A_n)$, $y \in X \setminus (A_k \cup A_n \cup s(x))$ and $B = \{x,y\}$, then there exists $q \in (s(x) \cup s(y)) \cap (X \setminus A_n)$ such that $$\Lambda(q,B) \cap A_n = \emptyset, \Lambda(q,B) \cap (X \setminus (A_m \cup A_n)) \neq \emptyset \ \text{and} \ \Lambda(q,B) \cap (X \setminus (A_k \cup A_n)) \neq \emptyset.$$
\end{claim}

Since $\mathcal{NB}(\mathcal{F}_1(X))=\mathcal{F}_1(X)$, there is $q \in X\setminus \{x,y\}$ such that $\{x,y\}$ blocks $\{q\}$.

Suppose that $q\in A_n \cup (X \setminus (s(x) \cup s(y)))$. Notice that $\int_X(A_n)\neq\emptyset$. Hence, using Lemma \ref{Lema5}, it is not difficult to see that there exists a subcontinuum $L$ of $X$ such that $\{q\}\cup A_n\subseteq L$, and $L\cap \{x,y\}=\emptyset$.  Thus, $\{x,y\}$ does not block $\{q\}$, by Lemma \ref{Lema5} \textit{(6)}. A contradiction. Therefore, 
\begin{equation}\label{claim6}
q \in (s(x)\cup s(y))\cap (X\setminus A_n).
\end{equation}

Without loss of generality, we may suppose that $q\in s(x)\cap (X\setminus A_n)$. Note that if $\Lambda(q,B) \cap A_n\neq\emptyset$, then there is $K\in\mathcal{C}(X)$ such that $q \in K$, $K\cup A_n\in\mathcal{C}(X)$ and $\{x,y\}\cap (K\cup A_n)=\emptyset$. Hence, $\{x,y\}$ does not block $\{q\}$, by Lemma \ref{Lema5} \textit{(6)}. A contradiction. Thus, $\Lambda(q,B)\cap A_n=\emptyset$.

Since $\{x,q\}\subseteq s(x)$, there exists a subcontinuum $N$ of $s(x)$ such that $N=\mathrm{irr}\{x,q\}$ \cite[Theorem 1, p.192]{Kuratowski1968II}. Hence, it is not difficult to see that there exists a proper subcontinuum $M$ of $N$, such that $q\in M\subseteq N\setminus \{x\}$ and $M\cap (X\setminus (A_m\cup A_n))\neq\emptyset$. Thus, $M\subseteq \Lambda(q,B)$ and $\Lambda(q,B)\cap (X\setminus (A_m\cup A_n))\neq\emptyset$.

We know that $\{x\}\in\mathcal{NB}(\mathcal{F}_1(X))$. Hence, there is an order arc $\alpha\colon [0,1]\to \mathcal{C}(X)$ such that $\alpha(0)=\{q\}, \alpha(1)=X$ and $x\notin \alpha(t)$, for each $t<1$. Let $t_0=\min\{t\in [0,1] : \alpha(t)\cap A_n\neq\emptyset\}$. Since $\int_X(A_n)\neq\emptyset$,  $t_0<1$. Furthermore, since $\alpha(t)\cap A_n=\emptyset$ for each $t<t_0$, $\alpha(t)\subseteq \Lambda(q,B)$.

If $y \notin \alpha(t_0)$, then $\{x,y\}$ does not block $\{q\}$, by Lemma \ref{Lema5} (6). A contradiction. Thus, $y \in \alpha(t_0)$. Since $y \in X\setminus (A_k\cup A_n)$, there is $t<t_0$ such that $\alpha(t)\cap X\setminus (A_k\cup A_n)\neq\emptyset$ and $y \notin \alpha(t)$. Thus, $\Lambda(q,B) \cap (X\setminus (A_k\cup A_n))\neq\emptyset$. This ends the proof of Claim \ref{claimN}. \medskip

Let $x_0\in s_i\cap (X\setminus (A_2\cup A_3))$ and let $y_0\in s_j\cap (X\setminus (A_1\cup A_3))$, for some $i,j\in I, i\neq j$ (see Lemma \ref{Lema5} \textit{(1)}). Let $B_0 = \{x_0,y_0\}$. By Claim~\ref{claimN}, there exists $w \in (s_i \cup s_j) \cap (X \setminus A_3)$ such that $$\Lambda(w,B_0)\cap A_3=\emptyset,  \Lambda(w,B_0) \cap (X\setminus (A_2\cup A_3))\neq\emptyset,\ \text{and} \ \Lambda(w,B_0)\cap (X\setminus (A_1\cup A_3))\neq\emptyset. $$

Let $y_1\in s_j\cap (X\setminus (A_1\cup A_2))$. Set $B_1 = \{x_0,y_1\}$. Claim~\ref{claimN} guarantees that there exists $z \in (s_i \cup s_j) \cap (X \setminus A_2)$ such that $$ \Lambda(z,B_1)\cap A_2=\emptyset, \ \Lambda(z,B_1)\cap (X\setminus (A_2\cup A_3))\neq\emptyset,\ \text{and}\ \Lambda(z,B_1)\cap (X\setminus (A_1\cup A_2))\neq\emptyset.$$

For sake of simplicity, $L_w$ and $L_z$ will denote $\Lambda(w,B_0)$ and $\Lambda(z,B_1)$, respectively. Thus, 
\begin{equation}\label{eq2}
L_w\cap A_3=\emptyset,  L_w \cap (X\setminus (A_2\cup A_3))\neq\emptyset,\ \text{and} \ L_w\cap (X\setminus (A_1\cup A_3))\neq\emptyset.
\end{equation}
and
\begin{equation}\label{eq3}
L_z\cap A_2=\emptyset, \ L_z\cap (X\setminus (A_2\cup A_3))\neq\emptyset,\ \text{and}\ L_z\cap (X\setminus (A_1\cup A_2))\neq\emptyset.
\end{equation}

Assume first that $z \in s_i \cap (X \setminus A_2)$. \medskip

We know that $p\in A_2\cap A_3$. Furthermore, note that if $A_2\cap A_3$ is connected, then $A_1\cup (A_2\cap A_3)$ is a proper subcontinuum of $X$ which contradicts Proposition \ref{prop3} \textit{(1)}. Thus, $A_2\cap A_3$ is disconnected. Let $P$ be the component of $A_2\cap A_3$ such that $p\in P$. 
Let $E$ be a component of $A_2\cap A_3$ such that $E\neq P$. Observe that $E\subseteq X\setminus\{p\}$. Let $l\in I$ be such that $E\subseteq s_l$. Since $s_l$ is dense, there exists a subcontinuum $H$ of $X$ such that $E\subseteq H\subseteq s_l$ and $H\cap A_1\neq\emptyset$. Let $\alpha\colon [0,1]\to \mathcal{C}(X)$ be an order arc such that $\alpha(0)=E$ and $\alpha(1)=H$ (see Theorem \ref{orderarc}). Let $t_0=\min\{t\in [0,1] : \alpha(t)\cap A_1\neq\emptyset\}$. Let $G_E=\alpha(t_0)$. It is clear that $G_E\subseteq s_l\cap (A_2\cup A_3)$ and $G_E\cap A_1\neq\emptyset$. Note that $\{x_0,p\}\cap G_E=\emptyset$. Hence, if $G_E\cap L_w\neq\emptyset$, then $G_E\subseteq L_w$ and $L_w\cap A_3\neq\emptyset$. This contradicts (\ref{eq2}). Similarly, if $G_E\cap L_z\neq\emptyset$, then $G_E\subseteq L_z$ and $L_z\cap A_2\neq\emptyset$, contradicting (\ref{eq3}). Thus, $G_E\cap (L_w\cup L_z)=\emptyset$. 

Let $W=A_1\cup P\cup (\bigcup\{G_E : E \text{ is component of }A_2\cap A_3\text{ and }p\notin E\})$. Let $x_w\in L_w\cap (X\setminus (A_1\cup A_3))$ and $x_z\in L_z\cap (X\setminus (A_1\cup A_2))$ (see (\ref{eq2}) and (\ref{eq3})). Observe that $\{x_w,x_z\}\cap W=\emptyset$.

Let $x'\in X\setminus\{x_w,x_z\}$. If $x'\in W$, then there exists a subcontinuum with nonempty interior $Q$ of $X$ such that $x'\in Q$ and $\{x_z,x_w\}\cap Q=\emptyset$. Hence, $\{x_w,x_z\}$ does not block $\{x'\}$, by Lemma \ref{Lema5} \textit{(6)}. Thus, $x'\in (X\setminus (A_1\cup A_2))\setminus W$ or $x'\in (X\setminus (A_1\cup A_3))\setminus W$. Suppose that $x'\in (X\setminus (A_1\cup A_2))\setminus W$. Since $\{x_z\}\in\mathcal{NB}(\mathcal{F}_1(X))$, there exists an order arc $\alpha\colon [0,1]\to \mathcal{C}(X)$ such that $\alpha(0)=\{x'\}, \alpha(1)=X$ and $x_z\notin \alpha(t)$ for each $t<1$. Let $t_0=\min\{t\in [0,1] : \alpha(t)\cap (A_1\cup A_2)\neq\emptyset\}$. Since $x_w\in X\setminus (A_1\cup A_3)\subseteq\int_X(A_2)$, $x_w\notin \alpha(t_0)$. Hence, $\alpha(t_0)\cap \{x_z,x_w\}=\emptyset$. Furthermore, $A_1\cup (A_2\cap A_3)\subseteq W$. Hence, $\alpha(t_0)\cap W\neq\emptyset$. Thus, there exists a subcontinuum $H$ of $X$ such that $\{x'\}\cup A_1\subseteq H$ and $H\cap\{x_z,x_w\}=\emptyset$. Therefore, $\{x_w,x_z\}$ does not block $\{x'\}$, by Lemma \ref{Lema5} \textit{(6)}. Similarly we have that $\{x_w,x_z\}$ does not block $\{x'\}$, if $x'\in (X\setminus (A_1\cup A_3))\setminus W$. Therefore, $\{x_z,x_w\}\in\mathcal{NB}(\mathcal{F}_1(X))$. A contradiction.

\vspace{0.2cm}

We have just to consider that $z\in s_j\cap (X\setminus A_2)$. \medskip

Note that $L_z\subseteq s_j$. Let $P$ be the component of $A_2\cap A_3$ such that $p\in P$. Let $\mathcal{E}$ be the collection of all the components of $A_2\cap A_3$ containing no the point $p$. For each $E\in\mathcal{E}$, $E\subseteq s_l$ for some $l\in I$. If $l\neq j$, let $G_E\in \mathcal{C}(X)$ such that $E\subseteq G_E\subseteq s_l\cap (A_2\cup A_3)$ and $G_E\cap A_1\neq\emptyset$. Note that  $G_E\cap L_z=\emptyset$ (see (\ref{eq3})). Let $\mathcal{D}=\{F\in\mathcal{E} : F\subseteq s_j\}.$
For each $F\in\mathcal{D}$, let $$J_{F}=\bigcup\{N\in\mathcal{C}(X) : F\subseteq N\subseteq X\setminus \{y_1,p\}\}.$$ 
It is clear that $J_F\cap L_z=\emptyset$, by (\ref{eq3}). Furthermore, $G_E\cap L_w=\emptyset$, for each $E\in \mathcal{E}\setminus\mathcal{D}$, and $J_F\cap L_w=\emptyset$, for each $F\in\mathcal{D}$, by (\ref{eq2}). We consider two cases: \medskip

\noindent \textbf{Case 1.} $J_F\cap A_1\neq\emptyset$, for each $F\in \mathcal{D}$. \medskip

Let $W=A_1 \cup P\cup (\cup\{J_F : F\in\mathcal{D}\}) \cup (\cup\{G_E : E\in\mathcal{E}\setminus\mathcal{D}\})$.
Let $y_w\in L_w\cap (X\setminus (A_1\cup A_3))$ and $y_z\in L_z\cap (X\setminus (A_1\cup A_2))$ (see (\ref{eq2}) and (\ref{eq3})). Let $x\in X\setminus \{y_w,y_z\}$. If $x\in W$, there exists a subcontinuum $H$ of $W$ such that $A_1\cup\{x\}\subseteq H$. Thus, $\{y_w,y_z\}$ does not block $\{x\}$, by Lemma \ref{Lema5} \textit{(6)}. If $x\notin W$, then $x\in (X\setminus (A_1\cup A_3\cup W))\cup (X\setminus (A_1\cup A_2\cup W))$. Suppose that $x\in X\setminus (A_1\cup A_3\cup W)$. Since $\{y_w\}\in\mathcal{NB}(\mathcal{F}_1(X))$, there is an order arc $\alpha\colon [0,1]\to\mathcal{C}(X)$ such that $\alpha(0)=\{x\}, \alpha(1)=X$ and $y_w\notin\alpha(t)$, for each $t<1$. Let $t_0=\min\{t\in [0,1] : \alpha(t)\cap (A_1\cup A_3)\neq\emptyset\}$. Since $\int_X(A_1\cup A_3)\neq\emptyset$, $t_0<1$. Furthermore, $\alpha(t_0)\subseteq A_2$. Hence, $\alpha(t_0)\cap\{y_w,y_z\}=\emptyset$. Observe that $\mathrm{bd}_X(A_2)\subseteq W$. Thus, $\alpha(t_0)\cap W\neq\emptyset$. Therefore, there exists a subcontinuum $H$ of $X$ such that $\alpha(t_0)\cup A_1\subseteq H$ and $H\cap\{y_w,y_z\}=\emptyset$, and $\{y_w,y_z\}$ does not block $\{x\}$, by Lemma \ref{Lema5} \textit{(6)}. Similarly if $x\in X\setminus (A_1\cup A_2\cup W)$. We have that $\{y_w,y_z\}\in\mathcal{NB}(\mathcal{F}_1(X))$. A contradiction. \medskip

\noindent \textbf{Case 2.} $J_F\cap A_1=\emptyset$, for some $F\in\mathcal{D}$. \medskip 

Since $F\cap A_1=\emptyset$, there are open subsets $B_2$ and $B_3$, of $A_2$ and $A_3$, respectibily, such that $F\subseteq B_2\cap B_3$ and $(\cl_X(B_2)\cup\cl_X(B_3))\cap (A_1\cup\{y_1\})=\emptyset$. Let $Y_2$ and $Y_3$ be the components of $\cl_X(B_2)$ and $\cl_X(B_3)$, respectibily, such that $F\subseteq Y_2\cap Y_3$. By \cite[Corollary 5.5]{Nadler}, $Y_2\cap (X\setminus (A_3\cup A_1))\neq\emptyset$ and $Y_3\cap (X\setminus (A_1\cup A_2))\neq\emptyset$. Thus, $Y_2\cup Y_3\subseteq J_F$ and: 
\begin{equation}\label{eq5}
J_F\cap A_1=\emptyset, \ J_F\cap (X\setminus (A_1\cup A_2))\neq\emptyset,\ \text{ and }\ J_F\cap (X\setminus (A_1\cup A_3))\neq\emptyset.
\end{equation}

Let $P'$ be the component of $A_1\cap A_2$ such that $p\in P'$. Let $Q$ be a component of $A_1\cap A_2$ such that $p\notin Q$. Then $Q\subseteq s_u$, for some $u\in I$. Since $s_u$ is dense, there is $H_Q\in\mathcal{C}(X)$ such that $Q\subseteq H_Q\subseteq (A_1\cup A_2)\cap s_u$ and $H_Q\cap A_3\neq\emptyset$. Note that $H_Q\cap\{y_1,p\}=\emptyset$. Hence, if $H_Q\cap (L_z\cup J_F)\neq\emptyset$, then either $H_Q\subseteq L_z$ or $H_Q\subseteq J_F$. This contradicts the fact that $L_z\cap A_2=\emptyset$ or $J_F\cap A_1=\emptyset$ (see (\ref{eq3}) and (\ref{eq5})). Therefore, $H_Q\cap (L_z\cup J_F)=\emptyset$, for each component $Q$ of $A_1\cap A_2$ such that $p\notin Q$. Let $$U=A_3\cup P'\cup (\cup\{H_Q : Q\text{ is component of }A_1\cup A_2\text{ and }p\notin Q\}).$$ 
By (\ref{eq3}) and (\ref{eq5}), there are $y_z\in L_z\cap (X\setminus (A_2\cup A_3))$ and $y_F\in J_F\cap (X\setminus (A_1\cup A_3))$. Note that $\{y_z,y_F\}\cap U=\emptyset$. 

Let $x\in X\setminus\{y_z,y_F\}$. If $x\in U$, then there is a subcontinuum $Z$ of $X$ such that $\{x\}\cup A_3\subseteq Z$ and $Z\cap\{y_z,y_F\}=\emptyset$. Thus, $\{y_z,y_F\}$ does not block $\{x\}$, by Lemma \ref{Lema5} \textit{(6)}. If $x\notin U$, then $x\in X\setminus (A_2\cup A_3\cup U)$ or $x\in X\setminus (A_1 \cup A_3\cup U)$. Without loss of generality, we may suppose that $x\in X\setminus (A_2\cup A_3\cup U)$. Since $\{y_z\}\in\mathcal{NB}(\mathcal{F}_1(X))$, there is an order arc $\alpha\colon [0,1]\to \mathcal{C}(X)$ such that $\alpha(0)=\{x\}, \alpha(1)=X$ and $y_z\notin\alpha(t)$ for each $t<1$. Let $t_0=\min\{t\in [0,1] : \alpha(t)\cap (A_2\cup A_3)\neq\emptyset\}$. Observe that $t_0<1$, $\alpha(t_0)\subseteq A_1$ and $\alpha(t_0)\cap \Bd_X(A_1)\neq\emptyset$. It is clear that $\alpha(t_0)\cap\{y_z,y_F\}=\emptyset$. Since $\Bd_X(A_1)\subseteq U$, $\alpha(t_0)\cap U\neq\emptyset$. Hence, there is a subcontinuum $T$ of $X$ such that $\alpha(t_0)\cup A_3\subseteq T$ and $T\cap\{y_z,y_F\}=\emptyset$. Thus, $\{y_z,y_F\}$ does not block $\{x\}$, by Lemma \ref{Lema5} \textit{(6)}, and $\{y_z,y_F\}\in\mathcal{NB}(\mathcal{F}_1(X))$. A contradiction. Therefore, $\mathcal{F}_1(X)\subseteq \mathcal{NWC}(X)$ and $X$ is a simple closed curve.
\end{proof}

\vspace{0.5cm}

\vspace{1cm}

\noindent (Javier Camargo and Luis Ortiz)\par
\noindent Escuela de Matem\'aticas, Facultad de Ciencias, Universidad Industrial de
Santander, Ciudad Universitaria, Carrera 27 Calle 9, Bucaramanga,
Santander, A. A. 678, COLOMBIA.\par
\noindent e-mail: jcamargo@saber.uis.edu.co\par
\noindent e-mail: luis0302\_96@hotmail.com\par
\bigskip

\noindent (David Maya)\par
\noindent Facultad de Ciencias, Universidad
Aut\'onoma del Estado de M\'exico,
Instituto Literario 100, Col. Centro, Toluca, CP 50000. M\'EXICO.\par
\noindent e-mail: dmayae@outlook.com, dmayae@uaemex.mx

\end{document}